\def\rit{\mathbb R}
\def\nit{\hbox{\it I\hskip -2pt  N}}
\def\cB{{\mathcal B}}
\def\bof{{\bf f}}
\def\bol{{\bf l}}
\def\bz{{\bf z}}
\newcommand{\ind}{\ensuremath{\mathds 1}}
\newcommand{\inte}{\ensuremath{\operatorname{int}}}
\newtheorem{defi}{Definition}[section]
\newtheorem{theo}{Theorem}[section]
\newtheorem{cor}{Corollary}[section]
\newtheorem{pro}{Proposition}[section]
\begin{document}

\title{Finite composite games: equilibria and dynamics}

\author{
Sylvain Sorin}

\address{
\textbf{Sylvain Sorin}\\
Sorbonne Universit\'es, UPMC Univ Paris 06, Institut de Math\'ematiques de Jussieu-Paris Rive Gauche, UMR 7586, CNRS, Univ Paris Diderot, Sorbonne Paris Cit\'e, F-75005, Paris, France}

\email{
sylvain.sorin@imj-prg.fr}

\thanks{}

\author{Cheng Wan}

\address{\textbf{Cheng Wan}\\
(Corresponding author) Department of Economics, University of Oxford; Nuffield College, New Road, Oxford, OX1 1NF, United Kingdom}

\email{
cheng.wan.2005@polytechnique.org\newline
https://sites.google.com/site/wancheng2012avril/
}
 \thanks{}

\date{October 21, 2015}

\subjclass[2010]{91A10, 91A22, 91A06, 91A13}

\keywords{finite game, composite game, variational inequality, potential game, dissipative game, evolutionary dynamics, Lyapunov function}

\thanks{The first author was partially supported by PGMO 2014-COGLED. We thank the referee for his or her very precise reading and comments.}

\bibliographystyle{apalike}

\begin{abstract} We study games with finitely many participants, each having finitely many choices.
We consider the following categories of participants:\\
(I) populations: sets of nonatomic agents,\\
(II) atomic splittable players,\\
(III) atomic non splittable players.\\
We recall  and compare the basic properties, expressed through variational inequalities, concerning equilibria, potential games and dissipative games, as well as evolutionary  dynamics.\\
Then we consider composite games where the three categories of participants are present, a typical example being congestion games,  and extend the previous properties of equilibria and dynamics.\\
Finally we describe an instance of composite potential game.
%next step: composite players.
\end{abstract}

\maketitle
%\tableofcontents

\section{Introduction}

We study equilibria and dynamics in finite games: there are finitely many ``participants" $i\in I$ and each of them has finitely many ``choices" $p\in S^i$.
The basic variable describing the strategic interaction  is thus a profile  $x = \{ x^{i}, \,  i \! \in\! I\}$, where each $x^i = \{x^{i}_p, \, p \!\in\! S^i\}$ is an element of the simplex $X^i = \Delta(S^i)$ on $S^i$. Let $X = \prod_{i\in I} X^i$.
\medskip

We consider three frameworks (or categories):
\smallskip

\noindent ($\mathbf{I}$) Population games where each  participant  $i \in I$ corresponds to a population: a nonatomic set of agents having all the same characteristics. In this setup $x^{i}_p$ is the proportion of agents of  ``type $p$" in population $i$. 
\smallskip

The two others correspond to  two kinds of $I$-player games where each participant $i\in I$ stands for an atomic player:

\noindent ($\mathbf{I\!I}$) Splittable case: $x^{i}_p$ is the proportion that player $i$ allocates to  choice $p$. (The set of pure moves of player $i$ is $X^i$.) 

\noindent ($\mathbf{I\!I\!I}$) Non splittable case: $x^{i}_p$ is the probability that player $i$ chooses  $p$. (The set of pure moves is $S^i$ and $x^i$ is a mixed strategy.) 
\medskip

As an example, consider  the following network where a routing game of each of the three frameworks takes place. Assume that arc $1$ and arc $2$ are connecting $o$ to $d$.
\begin{figure}[htb!]
 \begin{center}
 \begin{tikzpicture}
 \node[draw,circle,scale=0.7] (O)at(-2,0) {$o$};
 \node[draw,circle,scale=0.7] (D)at(2,0) {$d$};
% \node (P)[scale=0.8] at (-2.7,0) {$1$};
% \node (Q)[scale=0.8] at (2.7,0) {}; 
% \draw [->,>=stealth] (P) to (O);
% \draw [->,>=stealth] (D) to (Q);
 \draw[->,>=latex] (O) to[bend left=30] node[midway,above,scale=0.8]{arc 1: $m$}(D);
 \draw[->,>=latex] (O) to[bend right=30] node[midway,below,scale=0.8]{arc 2: $1$}(D);
 \end{tikzpicture}
 \end{center}
\end{figure}

First, in a population game, consider two populations of agents going from $o$ to $d$. Suppose that a proportion $x^1_1$ of population $1$ is taking arc $1$ while the rest of the population ($x^1_2 = 1 - x^1_1$) uses arc 2, and similarly for population 2.
%and a proportion $x^2_1$ of group $2$ is taking arc $1$ while the rest of the group is taking arc $2$. %Then the basic variable is $((x^1_1,1-x^1_1),(x^2_1,1-x^2_1)$. 

Next, in the splittable case, consider two players who  both have a stock to send from $o$ to $d$ and can divide their stock. Suppose that player $1$ sends a fraction $x^1_1$ of his stock by arc $1$ and the remaining   ($x^1_2$)  by arc $2$, and similarly for player $2$.
% sends a fraction $x^2_1$ of his stock by arc $1$ and the rest by arc 2. %Then the basic variable is $((x^1_1,1-x^1_1),(x^2_1,1-x^2_1)$. 
Finally, in the non splittable case, still consider two players having to send their stock from $o$ to $d$. However, they can no longer divide their stock, but have to send it entirely by one arc. Suppose that with probability $x^1_1$ player 1  sends it  by arc 1 and with probability $x^1_2 $  he sends it by arc 2, and similarly for player 2.

In all cases, the basic variable is $x \in X$, defined by $(x^1_1, x^2_1)\in [0,1]^2$.\\

Assume, more specifically, that the two participants are of size $\frac{1}{2}$ each and that the congestion is  $m$ on arc 1 and 1 on arc 2,  if the quantity of users is $m\in [0,1]$. Then it is easy to check that the equilibria are given by:

\noindent -- in framework $\mathbf{I}$, $ x^1_1 = x^2_1= 1$: all the agents choose arc 1;

\noindent  -- in framework $\mathbf{I\!I}$, $ x^1_1 = x^2_1= \frac{2}{3}$: both players send $\frac{2}{3}$ of their stock on arc 1 where $m= \frac{2}{3}$;

\noindent -- in framework $\mathbf{I\!I\!I}$,  the set of equilibria has one player choosing arc 1 (say $x^1_1 =1$) and the other choosing at random  with any $x^2_1 \in [0,1]$.
  
\section{Description of the  models} 
%\newpage 
%%%%%%%%%%%%%%%%%%%%%%%%%%%%%%%%%%%%%%%%%%%%%%%%%%%%%%%%%%%%%%%%%%%%%%%%%%%%%%%%%%%%%%%%%%%%%%%%%%%%%%%%%%

\subsection{Framework $\mathbf{I}$: population games} $ $\\
We consider here the nonatomic framework where each participant $i \in I$ corresponds to a  population of nonatomic agents.

The payoff (fitness) is defined by a family of continuous functions $\{F^{i}_p,\, i \!\in\! I, p\!\in\! S^i\}$ from $X$ to $\rit$, where $F^i_p(x)$ is the outcome of an agent  in population $i$ choosing $p$, when the environment is given by the basic variable $x$. 

An {\it equilibrium} is a point $x \in X$ satisfying:
%\begin{equation}
%x^{*i}_p > 0    \quad \Rightarrow \quad c_{p}(\bx^*) \leq c_{q}(\bx^*), \quad \forall q \in S^{i}.
%\end{equation}
\begin{equation}\label{Wardropdef}
 x^{i}_p > 0 \Rightarrow   F^{i}_p (x) \geq F^{i}_q (x), \quad \forall p,q \in S^i, \: \forall i\in I.
\end{equation} 
This corresponds to a {\it Wardrop equilibrium} \cite{War52}.

%Each player has no impact on the congestion hence chooses a path with minimal cost.
\begin{pro}[Smith \cite{Smi79},  Dafermos \cite{Daf80a}]\label{prop:vip_wardrop}$ $ \\
An equivalent characterization of (\ref{Wardropdef})  is through the variational inequality:
%\begin{equation}
%\langle c(\bx^*),\,x- x^*\rangle \geq0,\qquad \forall x \in X.
%\end{equation}
\begin{equation}\label{VIP_Wardrop_path}
\langle F^i(x), x^i - y^i \rangle \geq 0, \quad\forall  y^i \in X^i,  \forall i\in I,
\end{equation}
or alternatively:
\begin{equation}
\langle F(x), x - y  \rangle = \sum_{i\in I} \langle F^i(x), x^i - y^i \rangle \geq 0, \quad  \forall  y  \in X.
\end{equation}

\end{pro}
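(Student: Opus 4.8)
The plan is to reduce the equivalence to the elementary fact that a linear functional over a simplex is maximized exactly by the distributions supported on its $\argmax$ set. Fix a participant $i \in I$ and write $\langle F^i(x), z \rangle = \sum_{p \in S^i} F^i_p(x)\, z_p$. First I would rewrite the variational inequality (\ref{VIP_Wardrop_path}) in the equivalent form $\langle F^i(x), y^i \rangle \leq \langle F^i(x), x^i \rangle$ for all $y^i \in X^i$, that is, $x^i$ maximizes the linear map $z \mapsto \langle F^i(x), z\rangle$ over $X^i = \Delta(S^i)$. Setting $M^i := \max_{q \in S^i} F^i_q(x)$, this maximal value is $M^i$ and is attained precisely on the face of $X^i$ spanned by the vertices lying in $\argmax_{q} F^i_q(x)$.

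Then I would establish the two implications. For (\ref{Wardropdef}) $\Rightarrow$ (\ref{VIP_Wardrop_path}): if $x^i_p > 0$ implies $F^i_p(x) = M^i$, then $\langle F^i(x), x^i \rangle = M^i \sum_p x^i_p = M^i$, while $\langle F^i(x), y^i \rangle \leq M^i \sum_p y^i_p = M^i$ for every $y^i \in X^i$, which gives the desired inequality. For the converse (\ref{VIP_Wardrop_path}) $\Rightarrow$ (\ref{Wardropdef}), I would argue by contradiction: suppose some $p$ has $x^i_p > 0$ while $F^i_q(x) > F^i_p(x)$ for some $q$. Let $y^i \in X^i$ be obtained from $x^i$ by transferring a mass $\varepsilon \in (0, x^i_p]$ from $p$ to $q$; then $\langle F^i(x), x^i - y^i \rangle = \varepsilon\,(F^i_p(x) - F^i_q(x)) < 0$, contradicting the variational inequality.

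Finally, for the equivalence between the per-population form and the aggregated form, I would use that $X = \prod_{i\in I} X^i$ and that the bilinear expression splits as the sum over $i$. Summing the per-population inequalities over $i\in I$ immediately yields the aggregated inequality for every $y \in X$. Conversely, given the aggregated inequality, for a fixed $i$ I would test it against the profile $y$ agreeing with $x$ in all coordinates except the $i$-th, where it takes an arbitrary value $y^i \in X^i$; all summands but the $i$-th cancel, recovering (\ref{VIP_Wardrop_path}). There is no genuine obstacle here: the only point requiring care is the maximizer characterization over the simplex in the first paragraph, and the separation into coordinates, which hinges on the product structure of $X$ and the absence of cross-terms; once these are in place, both equivalences follow by the transfer-of-mass computation.
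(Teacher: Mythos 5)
Your proof is correct: the forward direction via the simplex-maximization observation, the converse via the mass-transfer perturbation $y^i = x^i - \varepsilon e_p + \varepsilon e_q$, and the reduction of the aggregated inequality to the per-population one by testing $y = (y^i, x^{-i})$ are all sound. The paper itself states this proposition without proof (citing Smith and Dafermos), and your argument is exactly the standard direct derivation the paper alludes to when it later remarks that the inequalities \eqref{VIP_Wardrop_path} for a Wardrop equilibrium ``are derived directly from its definition,'' so there is nothing to flag.
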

%\begin{proof}
%It is enough to work component by component (in terms of participants). Let $w^i= \max_{q \in S^{i}}F^i_{q}(\bx^*)$.

%On the one hand, \eqref{Wardropdef} implies $\langle F^i(\bx^*),\,  x^{*i}\rangle = w^i$ and  by the definition of $X^i$: $\langle F^i( \bx^*),\,  x^i \rangle \leq  w^i, \forall x^i \in X^i$.

%On the other hand,  \eqref{VIP_Wardrop_path} implies $\langle F^i(\bx^*),\, x^{*i}\rangle \geq  w^i$,  by choosing $x^i$ with support on the paths achieving $w^i$,  hence the result \eqref{Wardropdef}.
%\end{proof}

A special class  of population games corresponds to \emph{games with external interaction} where  each $F^i$ depends only on $x^{-i}$.

\subsection{Framework $\mathbf{I\!I}$: atomic splittable players} $ $\\
 In this case, each participant $i \in I$ corresponds to an atomic player with action set $X^i$. Given  functions $F^{i}_p$ as introduced above,  his gain is defined by:
\begin{equation*}
H^i(x) = \langle x^i, F^i(x)\rangle = \sum_{p\in S^i} x^{i}_p F^{i}_p(x).
\end{equation*}
In other words, it is the weighted average gain of all  fractions  $x^i_p$ allocated to different choices $p$.

An {\it equilibrium} is as usual a profile $x \in X$ satisfying:
\begin{equation}\label{Chucun_Nash_path}
H^i (x) \geq H^i (y^i, x^{-i}),  \quad  \forall  y^i \in X^i, \: \forall i\in I.
\end{equation}
Suppose that for all $p\in S^i$, $F^i_p(\cdot)$ is of class $\mathcal{C}^1$ on a neighborhood $\Omega$ of $X$, so that
\begin{equation*}
\frac{\partial \,H^i}{\partial \,x^{i}_{p}}(x)= F^i_{p}(x) + \sum_{q\in S^{i}}x^{i}_{q}\frac{\partial \,F^i_{q}}{\partial \,x^{i}_{p}}(x).
\end{equation*}
Let $\nabla^{i}\, H^{i}(x)$ stand for the gradient of $H^{i}(x^{i},\,x^{-i})$ with respect to $x^{i}$.
Then by a classical optimization criteria \cite{Kin86}  one has: 

\begin{pro}\label{prop:vip_atomicsplittable}
Any solution of (\ref{Chucun_Nash_path}) satisfies
\begin{equation}\label{VIP_Nash_path}
\langle \,\nabla {H}(x),\,x-y\rangle = \sum_{i\in I} \langle \,\nabla^i {H^i}(x),\,x^i-y^i\rangle \,\geq\,0,\qquad \forall\; y\in X.
\end{equation}
Moreover, if each  $H^i$ is concave with respect to $x^i$, there is equivalence.
\end{pro}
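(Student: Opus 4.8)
The plan is to handle the two implications separately, exploiting throughout the fact that $X = \prod_{i\in I} X^i$ is a product. This product structure means the aggregate inequality (\ref{VIP_Nash_path}) over all $y \in X$ is equivalent to the family of componentwise inequalities $\langle \nabla^i H^i(x), x^i - y^i\rangle \geq 0$, one for each $i$ and each $y^i \in X^i$: to recover the $i$-th inequality I would simply take $y$ that coincides with $x$ outside the $i$-th block, which kills every other summand. So I would reduce both directions to a single-player statement.

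For the forward direction (any equilibrium satisfies the inequality), I would fix an equilibrium $x$ and a player $i$, and read (\ref{Chucun_Nash_path}) as the assertion that $x^i$ maximizes $z \mapsto H^i(z, x^{-i})$ over the convex set $X^i$. I would then invoke the classical first-order necessary condition for maximization over a convex set: for any $y^i \in X^i$ the segment $x^i + t(y^i - x^i)$ lies in $X^i$ for $t\in[0,1]$, so the scalar map $g(t) = H^i(x^i + t(y^i - x^i), x^{-i})$ attains its maximum on $[0,1]$ at $t = 0$, whence $g'(0^+)\leq 0$. Since the $F^i_p$ are $\mathcal{C}^1$ on a neighborhood of $X$, the chain rule gives $g'(0) = \langle \nabla^i H^i(x), y^i - x^i\rangle$, and rearranging yields the componentwise inequality; summing over $i$ produces (\ref{VIP_Nash_path}).

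For the converse under concavity, I would assume the componentwise inequalities together with the hypothesis that each $H^i$ is concave in $x^i$. Concavity supplies the gradient overestimate $H^i(y^i, x^{-i}) \leq H^i(x^i, x^{-i}) + \langle \nabla^i H^i(x), y^i - x^i\rangle$ for every $y^i \in X^i$; the last term is nonpositive by the variational inequality, so $H^i(y^i,x^{-i}) \leq H^i(x)$, which is exactly (\ref{Chucun_Nash_path}).

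The only genuinely delicate point is isolating where concavity is indispensable: in the forward direction the inequality merely records the first-order stationarity of $x^i$, a condition that any local maximizer or stationary point satisfies, so the equivalence truly requires concavity to promote stationarity back to a global best response. Everything else is the textbook variational characterization of constrained maxima, rendered separable by the product form of $X$, and I do not expect any technical difficulty beyond the routine chain-rule computation of $g'(0)$.
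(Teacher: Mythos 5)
Your proof is correct and is exactly the argument the paper invokes: the paper does not write it out, but simply cites the ``classical optimization criteria'' of Kinderlehrer and Stampacchia, which is precisely your first-order necessary condition $g'(0^+)\leq 0$ along segments in the convex set $X^i$ for the forward direction, and the concavity gradient inequality for the converse. Your reduction to componentwise inequalities via the product structure of $X$ and your remark on where concavity is indispensable are both accurate and consistent with the paper's treatment.
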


Variational inequalities characterizing Nash equilibrium in atomic splittable games (Haurie and Marcotte \cite{Hau85}) and those characterizing Wardrop equilibrium in nonatomic games have different origins. Inequalities in \eqref{VIP_Nash_path} for a Nash equilibrium are obtained as first order conditions, while inequalities in \eqref{VIP_Wardrop_path} for a Wardrop equilibrium are derived directly from its definition.

\subsection{Framework $\mathbf{I\!I\!I}$: atomic non splittable} $ $\\
We consider here an $I$-player game where the  payoff  is defined by a family of  functions $\{G^{i},\, i \!\in \! I\}$, all from $S = \prod_{i\in I} S^i$ to $\rit$. We still denote by $G$ the multilinear extension to $X$ where each $X^i = \Delta (S^i)$ is considered as the set of mixed actions. 

An equilibrium is a profile $x \in X$ satisfying:
\begin{equation}
G^{i} (x^i, x^{-i})  \geq  G^{i} (y^i, x^{-i}) ,  \quad  \forall  y^i \in X^i, \: \forall i\in I.
\end{equation}

Let $V\!G^i$ denote the vector payoff associated to $G^i$. Explicitly, $V\!G^{i}_p: X^{-i} \rightarrow \rit$ is defined by $V\!G^{i}_p (x^{-i}) = G^i(p, x^{-i})$, for all $p \in S^i$. Hence $G^i (x) = \langle x^i, V\!G^{i} (x^{-i}) \rangle$.

An equilibrium is thus  a profile $x\in X$ satisfying:
\begin{equation}\label{VIP_matrix}
\langle V\!G(x), x- y  \rangle = \sum_{i\in I} \langle V\!G^i(x^{-i}), x^i - y^i \rangle \geq 0, \quad  \forall  y  \in X.
\end{equation}
%(Note that $V\!G^i$ plays the role of $F^i$ above in {\bf I}). 
\subsection{Remarks}$ $ \\
Frameworks $\mathbf{I}$ and $\mathbf{I\!I\!I}$ have been extensively studied.
% \textcolor{red}{Note that framework $\mathbf{I\!I\!I}$ cannot be subsumed by framework $\mathbf{I}$, because an $N$-population game in framework $\mathbf{I}$ is \emph{with self-interaction} (i.e. $F^i$ depends on $x^i$), and $F^i$ is not multi-linear in $x^{-i}$. 
%} 
 Framework $\mathbf{I\!I\!I}$ corresponds to games with external interaction, but the multilinearity of $V\!G^i(x^{-i})$ will not be used in this paper.

Note that $F$, $\nabla H$ and $V\!G$ play similar roles in the three frameworks. This can be seen from the three variational characterizations of equilibrium: \eqref{VIP_Wardrop_path}, \eqref{VIP_Nash_path} and \eqref{VIP_matrix}. 

We call $F$, $\nabla H$ and $V\!G$ \emph{evaluation functions}  and denote them by  $\Phi$ in each of the three frameworks.

From now on, we consider the following class of games which includes (i) population games where $F^i_p$ are continuous on $X$ for all $i$ and $p$, (ii) atomic splittable games where $H^i$ is  concave and of class $\mathcal{C}^1$ on a neighborhood of $X^i$ for all $i$, and (iii) atomic non splittable games. A typical game in this class is denoted by $\Gamma(\Phi)$, where $\Phi$ is its evaluation function.

\begin{defi} 
 $N\!E(\Phi)$ is the set of  $x\in X$ satisfying:
 \begin{equation}\label{basic}
 \langle  \Phi(x), x - y  \rangle \geq 0,  \qquad \forall y \in X.
 \end{equation}
$N\!E(\Phi)$ is the set of equilibria of $\Gamma (\Phi)$.
 \end{defi}
The next result recalls general properties of  a variational inequality on a closed convex set. 
\begin{theo}
 Let $C\subset \rit^d$ be a closed convex set  and  $\Psi$ a map from $C$ to $\rit^d$. \\
 Consider the variational inequality: 
\begin{equation}
\langle \Psi(x), x-y \rangle \geq 0, \qquad \forall y\in C.
\end{equation}
Four equivalent  representations are given by:
\begin{equation}
\Psi(x) \in N_{C} (x),
\end{equation}
where $N_C (x)$ is the normal c\^one to $C$ at $x$;
\begin{equation}
\Psi(x) \in [T_{C} (x)]^{\bot},
\end{equation}
where $T_C(x)$ is the tangent c\^one to $C$ at $x$ and $[T_{C} (x)]^{\bot}$ its polar;
\begin{equation}\label{TC}
\Pi_{T_{C} (x)}\Psi(x)= 0,
\end{equation}
where $\Pi$ is the projection operator on a closed convex subset; and
\begin{equation}\label{Proj}
\Pi_C [x + \Psi (x)] = x.
\end{equation}
%In game $\Gamma(F)$, $x\in X$ is an equilibrium if and only if $\Pi_{T_x(X)} (\Phi(x))=0$, where $T_x(X)$ is the tangent cone of $X$ at $x$, and $\Pi$ is the projection operator.
\end{theo}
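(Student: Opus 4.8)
The plan is to prove the four representations equivalent to the variational inequality one at a time, using only the definitions of the normal and tangent cones together with two standard facts of finite-dimensional convex analysis: the polarity between $T_C(x)$ and $N_C(x)$, and Moreau's decomposition theorem. I would first rewrite the inequality $\langle \Psi(x), x-y\rangle \geq 0$ for all $y \in C$ in the equivalent form $\langle \Psi(x), y-x\rangle \leq 0$ for all $y \in C$. This is exactly the defining property of the normal cone $N_C(x) = \{v : \langle v, y - x\rangle \leq 0,\ \forall y \in C\}$, so the variational inequality holds if and only if $\Psi(x) \in N_C(x)$; this settles the first representation with no further argument.

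For the second and third representations I would exploit the conic structure at $x$. Since $C$ is convex, $T_C(x)$ is the closed conical hull of $C - x$ and its polar coincides with the normal cone, i.e. $[T_C(x)]^\perp = N_C(x)$; this immediately yields the equivalence $\Psi(x) \in N_C(x) \iff \Psi(x) \in [T_C(x)]^\perp$. Applying next Moreau's decomposition theorem to the closed convex cone $K = T_C(x)$, every vector $z$ splits uniquely as $z = \Pi_K z + \Pi_{K^\perp} z$ with orthogonal summands; in particular $\Pi_K z = 0$ precisely when $z \in K^\perp$. Taking $z = \Psi(x)$ gives $\Pi_{T_C(x)} \Psi(x) = 0 \iff \Psi(x) \in [T_C(x)]^\perp$, which links the third representation to the second.

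It remains to connect the projection formula to the normal-cone condition. Here I would use the variational characterization of the metric projection onto a closed convex set: for any point $w$, one has $\Pi_C(w) = x$ if and only if $\langle w - x, y - x\rangle \leq 0$ for all $y \in C$, that is, $w - x \in N_C(x)$. Choosing $w = x + \Psi(x)$ turns this condition into $\Psi(x) \in N_C(x)$, so $\Pi_C[x + \Psi(x)] = x$ is equivalent to the first representation, closing the loop. The main obstacle is purely one of invoking the correct auxiliary results rather than of computation: the polarity $N_C(x) = [T_C(x)]^\perp$ and Moreau's theorem carry all the content, while the remaining passages are direct readings of definitions.
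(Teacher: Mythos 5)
Your proof is correct and follows essentially the same route as the paper: the normal-cone definition for the first equivalence, polarity of $T_C(x)$ and $N_C(x)$ plus Moreau's decomposition for the second and third, and the variational characterization of the metric projection for the fourth. If anything, your treatment of the projection step is slightly more explicit than the paper's, since you state the two-way equivalence $\Pi_C(w)=x \iff w-x \in N_C(x)$ rather than just exhibiting $x$ as the solution.
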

\begin{proof}
$\langle \Psi(x), x-y \rangle \geq 0$ for all $y\in C$ is equivalent to $\Psi(x) \in N_C(x)$. Hence, $\Psi(x) \in [T_{C}(x)]^{\bot}$ and $\Pi_{T_{C} (x)}\Psi(x)= 0$ by Moreau's decomposition \cite{Mo}.

Finally the characterization of the projection gives:
\begin{equation*}
\big\langle x  + \Psi (x) - \Pi_C [x + \Psi (x)] , y - \Pi_C [x  + \Psi (x)] \big\rangle  \leq 0,  \qquad \forall y\in X.
\end{equation*}
Therefore, $\Pi_C [x + \Psi (x)] = x$ is  the solution. 
% This is furthermore equivalent to $\Pi_{T_x(X)} (\Phi(x))=0$.
%$\Pi_X(x+\Phi(x))=x$
\end{proof}
Note that this result holds in a Hilbert space.

%%%%%%%%%%%%%%%%%%%
%%%%%%%%%%%%%%%%%%%%%%%

%%%%%%%%%%%%%%%%%%%%%%%
\section{Potential  and dissipative games}
\subsection{Potential games}
\begin{defi}
A real function $W$, of class $\mathcal{C}^1$ on a neighborhood $\Omega$ of $X$,  is a \emph{potential} for $\Phi$ if for each $i\in I$, there is a strictly positive function $\mu^i(x)$ defined on $X$ such that
 \begin{equation}\label{condition:pot}
\big\langle \nabla^{i} W(x) - \mu^i(x) \Phi^{i}(x) , y^i \big\rangle = 0, \quad \forall x \in X,   \forall y^i \in X^i_0, \, \forall i\in I,
\end{equation}
where $X^i_0 = \{y\in \rit^{|S^i|},\;  \sum_{p\in S^i} y_p  = 0 \}$ is the tangent space to $X^i$ and $\nabla^{i} $ denotes the gradient w.r.t. $x^i$. 

The game $\Gamma(\Phi)$ is then called a {\em potential game} and one says that $\Phi$  {\em derives}  from $W$.
\end{defi}

Some alternative  definitions of potential games have been used such as: 
\begin{equation}\label{condition:1}
\frac{\partial W(x)}{\partial x^i_p} - \frac{\partial W(x)}{\partial x^i_q} = \mu^i(x) [\Phi^i_p(x) -  \Phi^i_q(x)], \; \forall x\in \Omega,\,\forall p,q \in S^i
\end{equation}
or
 \begin{equation}\label{condition:2}
\frac{\partial W(x)}{\partial x^i_p}=\mu^i(x)  \Phi^i_p(x), \; \forall x \in \Omega, \,\forall p \in S^i.
\end{equation}
Remark that   \eqref{condition:2} yields \eqref{condition:1}, and 
\eqref{condition:1}  implies that the vector $\{ \frac{\partial W(x)}{\partial x^i_p} -  \mu^i  \Phi^i_p(x) \}_ {p \in S^i}$ is proportional to $(1, \ldots, 1)$, hence is orthogonal to 
$ X^i_0$, thus  (\ref{condition:pot}) holds.\\
%Note that $X^i_0$ is a linear space of dimension $|S^i|-1$. Let a subset of it, denoted by $B^i$, contain all the vectors in $\rit^{|S^i|}$ of form $(\ldots,1,\ldots,-1,\ldots)$: the $p$-th component of it is 1, the $q$-th component of it is $-1$, and the remaining components are $0$, for some $p, q \in S^i$ and $p\neq q$. Clearly, $B^i$ contains a basis of $X^i_0$. Hence \eqref{condition:pot} is true if the equality holds for all $y^i\in B^i$.  

Sandholm \cite{San00} defines a population potential game by \eqref{condition:2} with $\mu_i \equiv 1$ for all $i$. 

Monderer and Shapley \cite{Mon96} define potential games for finite games, which is equivalent to our definition \eqref{condition:1}  in framework $\mathbf{I\!I\!I}$.  
%\marginpar{ which one  ??? }
%\marginpar{\textcolor{magenta}{d'o\`u vient 16 implique 15}} 

\begin{pro}\label{thm:potential_nonatom}
Let $\Gamma(\Phi)$ be a game with potential $W$.\\
\noindent 1. Every local maximum of $W$ is an equilibrium of $\Gamma(\Phi)$. \\
\noindent 2. If $W$ is concave on $X$, then any equilibrium of $\Gamma(\Phi)$ is a global maximum of $W$ on $X$.
\end{pro}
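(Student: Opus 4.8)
The plan is to reformulate the potential condition \eqref{condition:pot} into a usable gradient identity, and then to exploit the product structure of $X$ so as to reduce everything to one player at a time. First I would record that, since $X^i_0$ is the tangent space to the simplex $X^i$, any difference $y^i - x^i$ of two points of $X^i$ lies in $X^i_0$, its coordinates summing to zero. Pairing \eqref{condition:pot} with such a direction makes the component of $\nabla^i W(x)$ along $(1,\dots,1)$ disappear, leaving the key identity
\[
\langle \nabla^i W(x),\, y^i - x^i\rangle = \mu^i(x)\,\langle \Phi^i(x),\, y^i - x^i\rangle, \qquad \forall x\in X,\ \forall y^i\in X^i.
\]
I would also note that, because $X=\prod_{i\in I} X^i$, both the equilibrium inequality \eqref{basic} and the first-order condition for a maximum of $W$ over $X$ decouple across players: each holds if and only if it holds blockwise, since an admissible competitor $y$ may be taken to differ from $x$ in a single block $y^i$ while agreeing with $x$ elsewhere.

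For part 1, let $x$ be a local maximum of $W$ on $X$. Freezing the other blocks at $x^{-i}$, the point $x^i$ is a local maximum of $W(\cdot,x^{-i})$ on the convex set $X^i$, so the first-order condition $\langle \nabla^i W(x),\, y^i - x^i\rangle \le 0$ holds for every $y^i\in X^i$. The identity above turns this into $\mu^i(x)\,\langle \Phi^i(x),\, y^i - x^i\rangle \le 0$, and since $\mu^i(x)>0$ I may cancel it to obtain $\langle \Phi^i(x),\, x^i - y^i\rangle \ge 0$ for every $i$ and every $y^i\in X^i$. Summing over $i\in I$ reproduces exactly \eqref{basic}, so $x\in N\!E(\Phi)$.

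For part 2, let $x\in N\!E(\Phi)$; by the blockwise form of \eqref{basic} one has $\langle \Phi^i(x),\, y^i - x^i\rangle \le 0$ for every $i$ and every $y^i\in X^i$. Multiplying by $\mu^i(x)>0$, applying the identity, and summing over $i$ yields $\langle \nabla W(x),\, y - x\rangle \le 0$ for all $y\in X$. Since $W$ is concave on $X$, the tangent inequality $W(y)\le W(x)+\langle \nabla W(x),\, y-x\rangle$ then gives $W(y)\le W(x)$ for all $y\in X$, i.e.\ $x$ is a global maximum of $W$ on $X$.

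The only delicate point is the decoupling step. Because the weights $\mu^i(x)$ are allowed to differ across players, a single summed inequality would not let me pass between $\nabla W$ and $\Phi$; it is essential to argue player by player, which the product structure of $X$ permits, and to use the strict positivity of each $\mu^i(x)$ to cancel or insert the weight. Everything else is a routine application of the first-order optimality condition and, in part 2, of the gradient characterization of concavity.
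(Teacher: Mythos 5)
Your proof is correct and takes essentially the same route as the paper's: the first-order condition for a constrained maximum on the convex set $X$, the identity $\langle \nabla^i W(x), y^i-x^i\rangle = \mu^i(x)\langle \Phi^i(x), y^i-x^i\rangle$ obtained from \eqref{condition:pot} since $y^i-x^i \in X^i_0$, cancellation of the strictly positive weights $\mu^i(x)$, and the gradient inequality for the concave case. You merely make explicit what the paper's terse proof leaves implicit, namely the blockwise decoupling over the product $X=\prod_i X^i$ and the reversal of the argument needed for part 2.
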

\begin{proof} 
Since  a local maximum $x$ of $W$  on the convex set $X$ satisfies:
\begin{equation}\label{conv}
\langle \nabla W (x), x - y  \rangle  \geq 0, \quad \forall y \in X,
\end{equation}
it follows from \eqref{condition:pot} that $\langle \mu^i(x) \Phi^i(x) , x^i\!  - \! y^i  \rangle \!  \geq \! 0$ for all $i$ and all $y\! \in\!  X$. This further yields \eqref{basic}. On the other hand, if $W$ is concave on $X$, a solution $x$ of \eqref{conv} is a global maximum of $W$ on $X$.
\end{proof}

\subsection{Dissipative games}
\begin{defi} 
The game $\Gamma(\Phi)$ is \emph{dissipative} if $\Phi$ satisfies:
\begin{equation*}
\langle \Phi (x) - \Phi(y), x - y  \rangle \leq 0,\qquad \forall\;(x,y) \in X\times X.
\end{equation*}
It is \emph{strictly dissipative} if
\begin{equation*}
\langle \Phi (x) - \Phi(y), x - y  \rangle < 0,\qquad \forall\;(x,y) \in X\times X \,\text{ with }\;x \neq y.
\end{equation*}
\end{defi}

In the framework of population games, Hofbauer and Sandholm \cite{HofSan09}  introduce this class of games and call them ``stable games''.

Notice that if $\Phi$  is dissipative and derives from  a  potential $W$, then $W$ is concave.

The set of equilibria in dissipative games has a specific structure (see  Hofbauer and Sandholm \cite[Proposition 3.1, Theorem 3.2]{HofSan09}) described as follows.
\begin{defi} 
 $S\!N\!E(\Phi)$ is the set of  $x\in X$ satisfying:
 \begin{equation}\label{eq:SNF}
 \langle  \Phi(y), x - y  \rangle \geq 0,  \qquad \forall y \in X.
 \end{equation}
 \end{defi}
 
%\textit{Question:} Is it true and should we prove that the equilibrium set of $F$ is convex and connected? (Link with Lyapunov functions later). 
 
\begin{pro}\label{lm:sne} $ $ \\
\begin{equation*}
S\!N\!E(\Phi) \subset N\!E(\Phi).
\end{equation*}
If $\Gamma(\Phi)$ is dissipative, then
\begin{equation*}
S\!N\!E(\Phi) = N\!E(\Phi).
\end{equation*}
\end{pro}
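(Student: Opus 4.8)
The plan is to prove the two assertions separately. The inclusion $S\!N\!E(\Phi)\subset N\!E(\Phi)$ holds for any continuous $\Phi$ and can be obtained by Minty's linearization trick along segments; the reverse inclusion, valid under dissipativity, should be an immediate consequence of the defining inequality of a dissipative game.

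First I would establish $S\!N\!E(\Phi)\subset N\!E(\Phi)$. Let $x\in S\!N\!E(\Phi)$ and fix an arbitrary $y\in X$. By convexity of $X$, the point $z_t=(1-t)x+ty$ belongs to $X$ for every $t\in(0,1]$. Applying \eqref{eq:SNF} with the test point $z_t$ gives $\langle \Phi(z_t), x-z_t\rangle\geq 0$; since $x-z_t=t(x-y)$ and $t>0$, this reduces to $\langle \Phi(z_t), x-y\rangle\geq 0$. Letting $t\to 0^{+}$ one has $z_t\to x$, and by continuity of $\Phi$ (which holds in each of the three frameworks under consideration) $\Phi(z_t)\to\Phi(x)$, so that $\langle \Phi(x), x-y\rangle\geq 0$. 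As $y$ was arbitrary, $x$ satisfies \eqref{basic}, i.e. $x\in N\!E(\Phi)$.

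Next, assuming $\Gamma(\Phi)$ dissipative, I would prove the reverse inclusion $N\!E(\Phi)\subset S\!N\!E(\Phi)$. Take $x\in N\!E(\Phi)$, so that $\langle\Phi(x),x-y\rangle\geq 0$ for all $y\in X$. The dissipativity inequality $\langle \Phi(x)-\Phi(y), x-y\rangle\leq 0$ rearranges to $\langle\Phi(y),x-y\rangle\geq\langle\Phi(x),x-y\rangle\geq 0$, which is exactly \eqref{eq:SNF}. Hence $x\in S\!N\!E(\Phi)$, and combined with the first part this yields the equality $S\!N\!E(\Phi)=N\!E(\Phi)$.

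The only delicate point is the passage to the limit in the first inclusion, which requires continuity of $\Phi$ along the segment joining $x$ and $y$; this is the main, and essentially the sole, obstacle, and it is resolved by recalling that $\Phi$ equals $F$ (continuous), $\nabla H$ (continuous, since $H$ is $\mathcal{C}^1$), or $V\!G$ (multilinear, hence continuous) in frameworks $\mathbf{I}$, $\mathbf{I\!I}$ and $\mathbf{I\!I\!I}$ respectively. The reverse inclusion is purely algebraic and poses no difficulty.
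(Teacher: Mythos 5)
Your proof is correct. The paper does not spell out its own proof of this proposition, deferring instead to Hofbauer and Sandholm \cite[Proposition 3.1, Theorem 3.2]{HofSan09}, and your argument --- Minty's segment trick $z_t=(1-t)x+ty$ together with continuity of $\Phi$ (which indeed holds in all three frameworks of the class considered) for $S\!N\!E(\Phi)\subset N\!E(\Phi)$, and the purely algebraic rearrangement $\langle \Phi(y),x-y\rangle \geq \langle \Phi(x),x-y\rangle \geq 0$ of the dissipativity inequality for the converse --- is precisely the standard proof given in that reference.
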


%\begin{proof}
%Given $x^*\! \in \! S\!N\!E(\Phi)$, for all $x\! \in \! X$ and $\varepsilon \in\, ]0, 1[$, define $y = \varepsilon x + (1- \varepsilon) x^* $. Then \eqref{eq:SNF} yields
%\begin{equation*}
%\langle  \Phi(y), x^* - x \rangle \geq 0,\qquad  \forall x \in X,
%\end{equation*}
%which implies, by continuity,
%\begin{equation*}
%\langle  \Phi(x^*), x^* - x \rangle \geq 0,\qquad   \forall x \in X.
%\end{equation*}
%
%On the other hand,  if $x^* \in N\!E(\Phi)$  and  $\Phi$ is dissipative, by adding $\langle  \Phi (y) -  \Phi(x^*), x^* -y  \rangle \geq 0$ and $\langle  \Phi (x^*), x^* - y  \rangle \geq 0$, one has $x^* \in S\!N\!E(\Phi)$.
%\end{proof}
 
\begin{cor} 
If $\Phi$ is dissipative, $N\!E(\Phi)$ is convex.\\
A strictly dissipative game $\Gamma(\Phi)$ has a unique equilibrium.
\end{cor}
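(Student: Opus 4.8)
The plan is to obtain both assertions as immediate consequences of Proposition~\ref{lm:sne} together with the dissipativity inequality, so almost no new machinery is needed.

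For convexity, I would start from the observation that dissipativity gives, via Proposition~\ref{lm:sne}, the identity $N\!E(\Phi) = S\!N\!E(\Phi)$. The point of passing to $S\!N\!E(\Phi)$ is that its defining condition \eqref{eq:SNF}, namely $\langle \Phi(y), x - y\rangle \geq 0$ for all $y\in X$, is \emph{linear in $x$} once $y$ is fixed: for each fixed $y$ the set $\{x\in X : \langle \Phi(y), x - y\rangle \geq 0\}$ is the intersection of $X$ with a closed half-space, hence closed and convex. Since $S\!N\!E(\Phi)$ is the intersection of these sets over all $y\in X$, it is convex as an intersection of convex sets. Transporting this back through the equality $N\!E(\Phi)=S\!N\!E(\Phi)$ yields the convexity of $N\!E(\Phi)$. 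The essential input here is precisely the equality of the two equilibrium sets, which is where dissipativity is used; the variational set $N\!E(\Phi)$ itself, defined by \eqref{basic}, is nonlinear in $x$ and is not convex in general.

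For the second statement I would first note that an equilibrium exists: $X$ is compact and convex and $\Phi$ is continuous in each of the three frameworks (in particular, for atomic splittable games $\Phi=\nabla H$ is continuous since $H^i$ is of class $\mathcal{C}^1$), so the standard existence theorem for variational inequalities on a compact convex set guarantees a solution of \eqref{basic}. Uniqueness is then the substance of the claim. Suppose $x$ and $x'$ are both equilibria with $x\neq x'$. Inserting $y=x'$ in \eqref{basic} written for $x$ gives $\langle \Phi(x), x - x'\rangle \geq 0$, and inserting $y=x$ in \eqref{basic} written for $x'$ gives $\langle \Phi(x'), x' - x\rangle \geq 0$. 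Adding these two inequalities produces
\begin{equation*}
\langle \Phi(x) - \Phi(x'),\, x - x'\rangle \geq 0,
\end{equation*}
which directly contradicts the strict dissipativity inequality, valid since $x\neq x'$. Hence there can be at most one equilibrium, and combined with existence this gives exactly one.

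I do not expect a genuine obstacle in either part; the work is entirely in choosing the right reformulation. The one point deserving care is to remember that convexity relies on the \emph{equality} $N\!E(\Phi)=S\!N\!E(\Phi)$ rather than merely the inclusion $S\!N\!E(\Phi)\subset N\!E(\Phi)$, and to flag the (standard) existence step so that ``unique equilibrium'' is not read as a purely conditional statement.
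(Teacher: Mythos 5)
Your proof is correct and is precisely the argument the paper intends: the corollary is stated there without proof as an immediate consequence of Proposition \ref{lm:sne}, and the standard route you follow (convexity of $S\!N\!E(\Phi)$ as an intersection of $X$ with closed half-spaces, transported through the dissipativity-based equality $N\!E(\Phi)=S\!N\!E(\Phi)$, plus uniqueness by summing the two variational inequalities \eqref{basic} at $x$ and $x'$ against strict dissipativity) is exactly the intended one. Your explicit flagging of the existence step is a sound addition, and it is justified within the paper's standing assumptions since $\Phi$ is continuous on the compact convex set $X$ in all three frameworks.
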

%\begin{proof}
%If $x,y\in X$ are both equilibrium, then by Lemma \ref{lm:sne},  $\langle F(y), x - y  \rangle \leq 0$ and $\langle F(x), y - x  \rangle \leq 0$. This implies $\langle F(x)- F(y), x - y  \rangle \geq 0$. By the definition of strict dissipative game, $x=y$.
%\end{proof}

 The description is more precise in the smooth case. 
\begin{pro}
Suppose that $\Phi$ is of class ${\mathcal{C}}^1$ on a neighborhood $\Omega$ of $X$. Denote by $J_{\Phi}(x)$ the Jacobian matrix of $\Phi$ at $x$, i.e. $J_\Phi (x) = \big(\big(\frac{\partial  \Phi^i_p}{\partial x^j_q} \big)_{q\in S^j} \big)_{p\in S^i}$. Then, $\Phi$  dissipative implies that $J_\Phi(x)$ is negative semidefinite on $T_X(x)$, the tangent c\^one to $X$ at $x$. 
\end{pro}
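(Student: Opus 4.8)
The plan is to obtain the negative semidefiniteness of $J_\Phi(x)$ directly from the dissipativity inequality by differentiating $\Phi$ along a feasible ray. Fix $x\in X$ and a direction $v\in T_X(x)$. Since $X$ is convex (a product of simplices), the tangent c\^one $T_X(x)$ is the closure of the cone of feasible directions, so it is enough to establish the claim for those $v$ with $x+tv\in X$ for all small $t>0$; the general case will follow by a continuity argument at the end.

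For such a $v$, I would apply the dissipativity inequality to the pair $(x+tv,\,x)$, giving
\[
\langle \Phi(x+tv)-\Phi(x),\,(x+tv)-x\rangle=t\,\langle \Phi(x+tv)-\Phi(x),\,v\rangle\leq 0 .
\]
Dividing by $t>0$ leaves $\langle \Phi(x+tv)-\Phi(x),\,v\rangle\leq 0$, and dividing once more by $t$ and letting $t\to 0^+$ produces, thanks to the $\mathcal{C}^1$ regularity of $\Phi$ (so that the difference quotient converges to the directional derivative $J_\Phi(x)v$),
\[
\langle J_\Phi(x)v,\,v\rangle\leq 0 .
\]
This is precisely the desired inequality for every feasible direction.

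To finish, I would extend the bound to all of $T_X(x)$: the quadratic form $v\mapsto\langle J_\Phi(x)v,\,v\rangle$ is continuous and the feasible directions are dense in the closed tangent c\^one, so the inequality passes to the limit. The only delicate point, and the closest thing to an obstacle, is justifying the second division by $t$ together with the passage $t\to 0^+$: one must ensure that the $\mathcal{C}^1$ smoothness really yields the symmetric form $\langle J_\Phi(x)v,v\rangle$ and not a one-sided artefact. This is guaranteed by the existence of the directional derivative of $\Phi$ along $v$, which is all that the limit uses, so the argument goes through without further difficulty.
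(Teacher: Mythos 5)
Your proof is correct and follows essentially the same route as the paper's: apply the dissipativity inequality along a feasible ray $x+tv$, divide by $t$ twice, and let $t\to 0^+$ using the $\mathcal{C}^1$ regularity to recover $\langle J_\Phi(x)v,v\rangle\leq 0$. Your closing density argument is harmless but not needed here, since $X$ is a polytope, so the cone of feasible directions at $x$ is already closed and coincides with $T_X(x)$ --- which is exactly what the paper's proof uses implicitly.
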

\begin{proof}
Given $x\in X$ and $z\in T_X(x)$, there exists $\epsilon>0$ such that $x+tz\in X$ for all $t\in ]0,\epsilon]$. Hence $\langle \Phi(x+tz)-\Phi(x), x+tz-x \rangle \leq 0$, which implies that $\langle \frac{\Phi(x+tz)-\Phi(x)}{x}, z \rangle \leq 0$. Since $\Phi$ is of class $\mathcal{C}^1$, letting $t$ go to 0 yields $\langle J_{\Phi}(x)z, z\rangle \leq 0$.
\end{proof}

\begin{defi} 
Suppose that $\Phi$ is of class ${\mathcal{C}}^1$ on a neighborhood $\Omega$ of $X$. The game $\Gamma(\Phi)$ is {\em strongly dissipative} if $J_\Phi(x)$ is negative definite on $T_X(x)$.
\end{defi}
%\marginpar{check again}
%\textcolor{magenta}{Properties if $\Phi$ is ${\mathcal{C}}^1$. ??}
 %%%%%%%%%%%%%%%%%%%%%%%%%%%%%%%
%%%%%%%%%%%%%%%%%%%%%%%%%%%%%%%
\section{Dynamics}
\subsection{Definitions}$ $\\
The general form of a dynamics  describing the evolution of the strategic interaction in game $\Gamma(\Phi)$ is given by: 
\begin{equation*}
\dot{x}=\cB_\Phi(x), \quad x\in X,
\end{equation*}
where $X$ is invariant,  so that  for each $i \in  I$, $\cB_\Phi^i (x) \in X^i_0$.
%$\cB$ should be $\cB_\Phi$ ????
\medskip

First recall the definitions of several dynamics expressed in terms of $\Phi$.
\medskip

\noindent (1) \textit{Replicator dynamics} (RD) (Taylor and Jonker \cite{TayJon78})
 \begin{equation*}
 \dot x^{i}_p =  x^{i}_p [\Phi^{i}_p (x) - \overline{\Phi}^{i} (x)], \quad p\in S^i, i \in I,
 \end{equation*}
 where
 \begin{equation*}
\overline \Phi^{i} (x) = \langle x^i, \Phi^i(x) \rangle = \sum_{p\in S^i}  x^{i}_p \Phi^{i}_p (x)
\end{equation*}
  is the average evaluation for participant $i$. 
 \medskip

\noindent (2) \textit{Brown-von-Neumann-Nash dynamics} (BNN) (Brown and von Neumann \cite{BrownvonN1950}, Smith \cite{Smith1983b,Smi1984a}, Hofbauer \cite{Hofb2000})\\
\begin{equation*}    
 \dot x^{i}_p =  \hat{\Phi}^{i}_p -  x^{i}_p \sum_{q\in S^i}  \hat{\Phi}^{i}_q,  \quad p\in S^i, i \in I,
\end{equation*}
where $\hat{\Phi}^{i}_q = [\Phi^{i}_q (x) - \overline \Phi^i(x)]^+$ is called the ``excess evaluation'' of $p$. (Recall that $[t]^+ \triangleq \max \{t,0\}$.)
\medskip

\noindent (3) \textit{Smith dynamics} (Smith) (Smith \cite{Smi84})   
\begin{equation*}
\dot x^{i}_p =  \sum_{q\in S^i}   x^{i}_q [\Phi^{i}_p (x) -  \Phi^{i}_q (x)]^+  -   x^{i}_p  \sum_{q\in S^i} [\Phi^{i}_q (x) -  \Phi^{i}_p (x)]^+,  \quad p\in S^i,i \in I,
\end{equation*}
where $[\Phi^{i}_p (x) -  \Phi^{i}_q (x)]^+$ corresponds to pairwise comparison \cite{San11}.
 \medskip

\noindent (4) \textit{Local/direct projection dynamics} (LP) (Dupuis and Nagurney \cite{DuNagu93}, Lahkar and Sandholm \cite{LahkarSand2008})
   \begin{equation*}
 \dot x^{i} = \Pi_{T_{X^i}(x^i)} [\Phi^i(x)], \quad i \in I,
 \end{equation*}
where  $T_{X^i}(x^i)$ denotes  the tangent c\^one to $X^i$ at $x^i$.
\medskip

\noindent (5) \textit{Global/target projection dynamics} (GP) (Friesz et al. \cite{FrieszAl1994}, Tsakas and Voorneveld \cite{TsakasVoor2009})
 \begin{equation*}
 \dot x^{i} = \Pi_{X^i} [x^i  + \Phi^i(x)]  - x^{i},  \quad i \in I.
 \end{equation*}
 Recall that the two dynamics above are linked by:  $\Pi_{T_{X^i}(x^i)}[\Phi^i(x)]=\lim_{\delta\rightarrow 0+}\frac{\Pi_{X^i}(x^i+\delta \Phi^i(x))-x^i}{\delta}$.
 \medskip

\noindent (6) \textit{Best reply dynamics} (BR) (Gilboa and Matsui \cite{gm91})
 \begin{equation*}
 \dot x^{i} \in  B\!R^{i}(x)  - x^{i},  \quad i \in I,
 \end{equation*}
 where 
 \begin{equation*}
 B\!R^{i}(x) = \{ y^i  \in X^i,\;  \langle y^i - z^i , \Phi^i (x) \rangle \geq 0, \forall z^i \in X^i \}.
 \end{equation*}
 
%\textcolor{red}{ The main purpose of this section is to provide a repertoire which unifies dynamics with parameters $x^i_p$ by making explicit (i) the underlying games, (ii) the meaning of these variables and (iii) the Nash equilibrium conditions. Based on this repertoire, we construct a common paradigm for the different frameworks, such as potential games and dissipative games.}

\subsection{General properties}$ $\\
We define here properties expressed in terms of $\Phi$. 
\begin{defi}\label{def:PCNS}
Dynamics $\cB_\Phi$ satisfies:
\smallskip

\noindent i)  {\em positive correlation (PC)} (Sandholm \cite{San00}) if:
\begin{equation*}
\langle \cB_\Phi^i(x),  \Phi^i(x) \rangle >0, \quad  \forall  i \in I, \forall \,x \in X \text{ s.t. } \cB_\Phi^i(x)\neq 0.
\end{equation*}
(This  corresponds to MAD (myopic adjustment dynamics) (Swinkels \cite{Swin1993}): assuming the configuration given,  an unilateral change should increase the evaluation);
\smallskip

\noindent ii) {\em Nash stationarity} if:

for $x\in X$, $\cB_\Phi(x)=0$ if and only if $x$ is an equilibrium  of $\Gamma(\Phi)$.
\end{defi}

The next proposition collect results that have been obtained in different frameworks. We provide a unified treatment with simple and short proofs. 
\begin{pro}\label{prop:PC}
(RD), (BNN), (Smith), (LP),  (GP)  and (BR) satisfy (PC).
\end{pro}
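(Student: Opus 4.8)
The plan is to verify positive correlation for each of the six dynamics separately, since the verification amounts to a direct computation in each case and there is no single unified argument. Throughout, I fix $i\in I$ and $x\in X$ with $\cB^i_\Phi(x)\neq 0$, and I must show $\langle \cB^i_\Phi(x),\Phi^i(x)\rangle>0$. For the three ``population-style'' dynamics (RD), (BNN), (Smith), the key observation is that each component $\cB^i_\Phi(x)$ lies in $X^i_0$, so I may freely replace $\Phi^i(x)$ by $\Phi^i(x)-c(1,\ldots,1)$ for any scalar $c$ without changing the inner product; the natural choice is $c=\overline\Phi^i(x)$, which recenters the payoffs around the average.

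For (RD), writing $\psi_p=\Phi^i_p(x)-\overline\Phi^i(x)$, the inner product becomes $\sum_p x^i_p\psi_p^2\cdot$(after the recentering trick), which is nonnegative and strictly positive unless all active coordinates share the common value, i.e.\ unless $\cB^i_\Phi(x)=0$. For (BNN), I recenter and use $\hat\Phi^i_q=[\psi_q]^+$; the inner product reduces to $\sum_p \hat\Phi^i_p\psi_p = \sum_p [\psi_p]^+\psi_p = \sum_p ([\psi_p]^+)^2$, again nonnegative and zero only when every excess vanishes (equilibrium). For (Smith), the inner product expands into a double sum over pairs $(p,q)$ of terms $x^i_q[\psi_p-\psi_q]^+(\psi_p-\psi_q)$, and since $t\cdot[t]^+=([t]^+)^2\geq 0$ this is a sum of nonnegative terms, strictly positive unless no profitable pairwise comparison exists.

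For the two projection dynamics I would invoke the geometry of projections onto convex sets rather than coordinate computations. For (LP), $\cB^i_\Phi(x)=\Pi_{T_{X^i}(x^i)}[\Phi^i(x)]$ is the projection of $\Phi^i(x)$ onto the tangent cone; by Moreau's decomposition $\Phi^i(x)=\Pi_{T_{X^i}(x^i)}[\Phi^i(x)]+\Pi_{[T_{X^i}(x^i)]^\bot}[\Phi^i(x)]$ with the two components orthogonal, so $\langle \cB^i_\Phi(x),\Phi^i(x)\rangle=\|\cB^i_\Phi(x)\|^2>0$ whenever the projection is nonzero. For (GP), with $\cB^i_\Phi(x)=\Pi_{X^i}[x^i+\Phi^i(x)]-x^i$, I would use the variational characterization of the projection (as in \eqref{Proj}): setting $w=\Pi_{X^i}[x^i+\Phi^i(x)]$, the obtuse-angle inequality with the test point $x^i\in X^i$ gives $\langle x^i+\Phi^i(x)-w,\,x^i-w\rangle\leq 0$, which rearranges to $\langle \Phi^i(x),\cB^i_\Phi(x)\rangle\geq \|\cB^i_\Phi(x)\|^2>0$. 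For (BR), if $y^i\in B\!R^i(x)$ then $\cB^i_\Phi(x)=y^i-x^i$ and the defining inequality of $B\!R^i$ with test point $z^i=x^i$ gives directly $\langle y^i-x^i,\Phi^i(x)\rangle\geq 0$; strict positivity follows because $\cB^i_\Phi(x)\neq 0$ forces $x^i$ itself not to be a best reply, so the best-reply value strictly exceeds $\langle x^i,\Phi^i(x)\rangle$.

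The main obstacle is bookkeeping rather than depth: in each of the first three cases I must check that the nonnegative sum vanishes \emph{precisely} when $\cB^i_\Phi(x)=0$, so that the strict inequality in (PC) is justified exactly on the stated domain. This requires matching the zero set of the relevant quadratic/excess expression with the zero set of the dynamic, and the cleanest way is to note in each case that the inner product and $\cB^i_\Phi(x)$ vanish simultaneously. The projection and best-reply arguments are cleaner, as orthogonality and the projection inequality deliver the strict bound immediately from $\cB^i_\Phi(x)\neq 0$.
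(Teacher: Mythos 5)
Your proposal is correct and follows essentially the same route as the paper's proof: the identical coordinate computations reducing $\langle\cB^i_\Phi(x),\Phi^i(x)\rangle$ to $\sum_p x^i_p\big(\Phi^i_p(x)-\overline\Phi^i(x)\big)^2$, $\sum_p\big(\hat\Phi^i_p(x)\big)^2$ and $\sum_{p,q}x^i_q\big([\Phi^i_p(x)-\Phi^i_q(x)]^+\big)^2$ for (RD), (BNN), (Smith) (your recentering via $\cB^i_\Phi(x)\in X^i_0$ is just a tidy repackaging of the paper's add-and-subtract of $\overline\Phi^i(x)$), Moreau's decomposition for (LP), the obtuse-angle projection inequality for (GP), and the defining inequality of $B\!R^i(x)$ tested at $z^i=x^i$ for (BR), with the same matching of the equality cases to $\cB^i_\Phi(x)=0$. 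The only looseness, in (BR), where a nonzero selection $y^i-x^i$ with $x^i\in B\!R^i(x)$ would make the inner product vanish, is shared with the paper's own proof, which likewise resolves it by taking the zero selection at such points.
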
 
\begin{proof} $ $\\
%For the sake of simplicity, the proof is done for one population case.
%% %%%%%%%%%%%%%%%%
\noindent (1) RD  (Sandholm \cite{San09,San11}):
\begin{align*}      
\langle \cB_\Phi^i(x), \Phi^i(x) \rangle 
& = \sum_{p\in S^i}  x^i_{p} \big[\Phi^i_p(x) - \overline{\Phi}^i(x)\big] \Phi^i_p(x) \\
& = \sum_{p\in S^i}  x^i_{p} \big[\Phi^i_p(x) - \overline{\Phi}^i(x)\big]^2 + \sum_{p\in S^i}  x^i_{p}\big[\Phi^i_p(x)- \overline{\Phi}^i(x)\big]\, \overline{\Phi}^i(x)\\
& = \sum_{p\in S^i}  x^i_{p} \big[\Phi^i_p(x) - \overline{\Phi}^i(x) \big]^2 +\big[\sum_{p\in S^i}  x^i_{p} \Phi^i_p(x)- \overline{\Phi}^i(x)\big]\, \overline{\Phi}^i(x)\\
& = \sum_{p\in S^i}  x^i_{p}\big[\Phi^i_p(x) - \overline{\Phi}^i(x) \big]^2  \geq 0.
\end{align*}
The equality holds if and only if for all  $p\in S^i$, $x^i_{p}\, [\Phi^i(x)-\overline{\Phi}^i(x)]^2=0$ or, equivalently  $x^i_{p}\, [\Phi^i(x)-\overline{\Phi}^i(x)]=0$, hence  $\cB^i_\Phi(x)=0$.
\medskip

\noindent (2) BNN (Sandholm \cite{San00,San05,San11}, Hofbauer \cite{Hofb2000}):
\begin{align*}
\langle \cB_\Phi^i(x), \Phi^i(x) \rangle 
& = \sum_{p\in S^i} \big[\hat{\Phi}^i_p(x) - x^ i_{p}\sum_{q\in S^i}\hat{\Phi}^i_{q}(x)\big] \Phi^ i_p(x)  
 = \sum_{p\in S^i} \hat{\Phi}^i_p(x)  \Phi^i_p(x) - \sum_{p\in S^i}x^i_{p}  \Phi^i_p(x)  \sum_{q\in S^i}\hat{\Phi}^i_{q}(x)\\
& = \sum_{p\in S^i} \hat{\Phi}^i_p(x)  \Phi^i_p(x) - \sum_{q\in S^i}\hat{\Phi}^i_{q}(x) \overline{\Phi}^i(x)
 = \sum_{p\in S^i} \hat{\Phi}^i_p(x) \big[\Phi^i_p(x) - \overline{\Phi}^i(x) \big]\\
& =\sum_{p\in S^i} (\hat{\Phi}^i_p(x))^2
 \geq  0.
\end{align*}
The equality holds if and only if for all $p\in S^i$, $\hat{\Phi}^i_p(x)=0$, %(or equivalently, $x\in N\!E(F)$),
 in which case $\cB^i_\Phi(x)=0$.
\medskip

\noindent (3) Smith (Sandholm \cite{San09,San11}):
\begin{align*}
\langle \cB_\Phi^i(x) ,   \Phi^i(x) \rangle 
& = \sum_{p\in S^i}\big(\sum_{q\in S^i} x^i_q [\Phi^i_p(x)- \Phi^i_q(x)]^+\big) \Phi^i_p(x) - \sum_{p\in S^i}x^i_p  \Phi^i_p(x) \sum_{q\in S^i} [\Phi^i_q(x)- \Phi^i_p(x)]^+ \\
& = \sum_{p,q} x^i_q  \Phi^i_p(x) [\Phi^i_p(x)- \Phi^i_q(x)]^+ -  \sum_{q,p}x^i_q  \Phi^i_q(x) [\Phi^i_p(x)-\Phi^ i_q(x)]^+ \\
&  = \sum_{p,q} x^i_q ([\Phi^i_p(x)- \Phi^i_q(x)]^+)^2
 \geq 0.
\end{align*}
The equality holds if and only if for all $q \in S^i$, either $x^i_q=0$ or $\Phi^i_{q}(x)\geq  \Phi^i_{p}(x)$ for all $p\in S^i$,
% (or equivalently, $x\in N\!E(F)$) ,
 in which case $\cB^i_\Phi(x)=0$. 
\medskip

\noindent (4) LP (Lahkar and Sandholm \cite{LahkarSand2008}, Sandholm \cite{San11}): Recall that, if $N_{X^i}(x^i)$ denotes the normal c\^one to $X^i$ at $x^i$  (the polar of  $T_{X^i}(x^i)$), then for any $v \in \rit^{S^i}$:
 $v=\Pi_{T_{X^i}(x^i)}v+\Pi_{N_{X^i}(x^i)}v$ and $\langle \Pi_{T_{X^i}(x^i)}v,  \Pi_{N_{X^i}(x^i)}v\rangle=0$ (Moreau's decomposition, \cite{Mo}).
  %$ \Phi^i(x)=\Pi_{T_{x^i}(X^i)}[\Phi^i(x)]+\Pi_{N_{x^i}(X^i)}[ \Phi^i(x)]$ and $\langle \Pi_{T_{x^i}(X^i)}[ \Phi^i(x)],  \Pi_{N_{x^i}(X^i)}[ \Phi^i(x)]\rangle=0$. 
 Thus,
\begin{align*}
\langle \cB_\Phi^i(x) ,   \Phi^i(x) \rangle 
&  =\big\langle \Pi_{T_{X^i}(x^i)} [\Phi^i(x)],  \Phi^i(x) \big\rangle 
 = \big\langle \Pi_{T_{X^i} (x^i)} [\Phi^i(x)], \Pi_{T_{X^i}(x^i)} [\Phi^i(x)]+\Pi_{N_{X^i}(x^i)} [\Phi^i(x)] \big\rangle\\
& = \big\| \Pi_{T_{X^i}(x^i)} [\Phi^i(x)] \big\|^2 \geq 0,
\end{align*}
and the equality holds if and only if $\Pi_{T_{X^i}(x^i)} [\Phi^i(x)]=0$, i.e. $\cB^i_\Phi(x)=0$.
\medskip

\noindent (5) GP (Tsakas and Voorneveld \cite{TsakasVoor2009}): Let $z=x+\Phi (x)$. Then,
\begin{align*}
\langle \cB_\Phi^i(x) ,   \Phi^i (x) \rangle 
& = \big\langle \Pi_{X^i}(z^i)-x^i, z^i-x^i \big\rangle 
 = \big\langle \Pi_{X^i}(z^i)-x^i, z^i- \Pi_{X^i}(z^i) + \Pi_{X^i}(z^i) -x^i \big\rangle\\ 
& = - \big\langle x^i- \Pi_{X^i}(z^i), z^i- \Pi_{X^i}(z^i) \big\rangle + \| \Pi_{X^i}(z^i)-x^i\|^2\\
& \geq   \| \Pi_{X^i}(z^i)-x^i\|^2 
 \geq 0.
\end{align*}
The second last inequality holds since  $x^i\in X^i$,  thus $\big\langle x^i- \Pi_{X^i}(z^i), z^i- \Pi_{X^i}(z^i) \big\rangle \leq 0$. The equality occurs in both inequalities if and only if $x^i=\Pi_{X^i}(z^i)$, in which case $\cB^i_\Phi(x)=0$.
% (Recall that according to Theorem~\ref{thm:ne_exist} and the remark after it, this is true if and only if $x\in N\!E(F)$.) 
\medskip

\noindent (6) BR (Sandholm \cite{San11}):
\begin{align*}
\langle \cB_\Phi^i(x) ,   \Phi^i (x) \rangle = \langle  y^i  - x^i  ,   \Phi^i (x) \rangle \geq 0
\end{align*}
since $y^i \in B\!R^i (x)$. The equality holds if and only if $x^i \in B\!R^i (x)$, hence $\cB^i_\Phi(x)=0$.
\end{proof}
\smallskip

\begin{pro}\label{prop:NS}
(BNN), (Smith), (LP), (GP)  and (BR) satisfy Nash stationarity on $X$. \\
(RD) satisfy Nash stationarity on $\inte X$.
\end{pro}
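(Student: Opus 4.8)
The plan is to reduce Nash stationarity to a per-player statement and then handle the projection and best-reply dynamics directly through the variational characterizations of Section 2, while extracting the rest-point condition for the remaining dynamics from the positive-correlation computations already carried out in Proposition \ref{prop:PC}.

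First I would record the product structure. Since $X = \prod_{i\in I} X^i$ and each $\cB_\Phi^i(x)$ lives in $X^i_0$ and acts only on the $i$-th block, $\cB_\Phi(x) = 0$ holds if and only if $\cB_\Phi^i(x) = 0$ for every $i$, and likewise \eqref{basic} splits into the $I$ conditions $\langle \Phi^i(x), x^i - y^i\rangle \geq 0$ for all $y^i \in X^i$. Testing the latter against the vertices $e_p$ of $X^i$ and using $\overline{\Phi}^i(x) = \langle x^i, \Phi^i(x)\rangle$ shows that player $i$'s equilibrium condition is equivalent to the single family of inequalities $\Phi^i_p(x) \leq \overline{\Phi}^i(x)$ for all $p \in S^i$; equality on the support of $x^i$ is then automatic, since $0 = \sum_p x^i_p[\overline{\Phi}^i(x) - \Phi^i_p(x)]$ is a sum of nonnegative terms. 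This reformulation is what I would compare each rest-point condition against.

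For (LP) and (GP) the equivalence is immediate from the Theorem of Section 2: $\cB_\Phi^i(x) = \Pi_{T_{X^i}(x^i)}[\Phi^i(x)] = 0$ is exactly \eqref{TC} and $\cB_\Phi^i(x) = \Pi_{X^i}[x^i + \Phi^i(x)] - x^i = 0$ is exactly \eqref{Proj}, each equivalent to player $i$'s variational inequality, hence to equilibrium. For (BR), $\cB_\Phi^i(x) = 0$ amounts to $x^i \in B\!R^i(x)$, which by the definition of $B\!R^i$ reads $\langle x^i - z^i, \Phi^i(x)\rangle \geq 0$ for all $z^i \in X^i$, i.e.\ again the equilibrium condition. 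For (BNN) and (Smith) I would reuse the identities in the proof of Proposition \ref{prop:PC}, where $\langle \cB_\Phi^i(x), \Phi^i(x)\rangle$ was shown to equal a sum of nonnegative terms vanishing exactly when $\cB_\Phi^i(x) = 0$. Thus $\cB_\Phi^i(x) = 0$ forces that sum to vanish, which for (BNN) gives $\hat{\Phi}^i_p(x) = 0$, i.e.\ $\Phi^i_p(x) \leq \overline{\Phi}^i(x)$, for all $p$, and for (Smith) gives $x^i_q = 0$ or $\Phi^i_q(x) = \max_p \Phi^i_p(x)$ for each $q$, i.e.\ $\mathrm{supp}\,x^i \subseteq \argmax_p \Phi^i_p(x)$; both coincide with the equilibrium characterization above, and the converse implications are direct substitutions.

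The one delicate point, and the reason the statement for (RD) is weaker, is the boundary of $X$. For (RD) the rest-point condition from Proposition \ref{prop:PC} is only $x^i_p[\Phi^i_p(x) - \overline{\Phi}^i(x)] = 0$ for all $p$, which is satisfied by every vertex of $X^i$ irrespective of the payoffs, so it cannot imply equilibrium on $\partial X$. On $\inte X$, however, all $x^i_p > 0$, the condition becomes $\Phi^i_p(x) = \overline{\Phi}^i(x)$ for all $p$, and since the support is full this coincides with $\Phi^i_p(x) \leq \overline{\Phi}^i(x)$ for all $p$, i.e.\ equilibrium. I expect this interior/boundary dichotomy to be the only genuinely subtle issue; every other case is a direct appeal to the earlier Theorem, to the definition of $B\!R^i$, or to the already-computed positive-correlation identities.
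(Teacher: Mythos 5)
Your proof is correct, and for (LP), (GP), (BR) and (RD) it coincides with the paper's: the two projection dynamics are dispatched by the characterizations \eqref{TC} and \eqref{Proj} from the Theorem of Section 2, (BR) by the definition of $B\!R^i$, and (RD) by the same interior/vertex dichotomy (the paper likewise notes that an interior rest point is equalizing, hence an equilibrium, while boundary rest points need not be). Where you genuinely diverge is (BNN) and (Smith). The paper proves ``rest point $\Rightarrow$ equilibrium'' coordinate-wise and by contradiction: for (BNN), if some $\hat\Phi^i_p(x)>0$ it exhibits a $q$ in the support with $\hat\Phi^i_q(x)=0$ and checks $\cB^i_{\Phi,q}(x)<0$; for (Smith) it picks, among the choices carrying mass that admit a better alternative, one with smallest $\Phi^i_p(x)$ and shows its velocity is strictly negative. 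You instead recycle the positive-correlation computations of Proposition \ref{prop:PC} as exact identities: $\cB^i_\Phi(x)=0$ trivially forces $\langle \cB^i_\Phi(x),\Phi^i(x)\rangle=0$, and since that inner product was shown to \emph{equal} a sum of squares, each term vanishes, giving $\hat\Phi^i_p(x)=0$ for all $p$ (BNN) and $\mathrm{supp}\,x^i\subseteq\argmax_p\Phi^i_p(x)$ (Smith), which match your vertex-test reformulation of \eqref{basic}. This is legitimate precisely because the proof of Proposition \ref{prop:PC} establishes equalities rather than mere inequalities, and it buys a uniform mechanism: any dynamics whose (PC) inner product is a sum of nonnegative terms vanishing exactly at rest points inherits this direction of Nash stationarity for free. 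The paper's arguments are self-contained and slightly more informative (they identify which coordinate moves and in which direction), while yours is more economical and makes explicit a small step the paper leaves implicit, namely that player $i$'s equilibrium condition is equivalent to $\Phi^i_p(x)\le\overline{\Phi}^i(x)$ for all $p\in S^i$.
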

\begin{proof} $ $ \\
%Let us only prove that if $x\in N\!E(F)$ then $\cB(x)=0$, in the one population case. The inverse is can be immediatly deduced from the proof of Proposition \ref{prop:PC} (and, in addition, Theorem \ref{thm:ne_exist} for (GP)).
 %%%%%%%%%%%%%%%%%%
\noindent (1) BNN (Sandholm \cite{San00,San05,San11}): $x\in N\!E(\Phi)$ is equivalent to:  $\hat{\Phi}_{p}(x) =0$ for all $p\in S$. Hence $\cB_\Phi(x)=0$.\\
Reciprocally, assume the existence of $i\in I$ and $p\in S^i$ such that  $\hat{\Phi}^i_{p}(x) >0$. Since there exists $q$ with $x^i_q >0$ and $\hat{\Phi}^i_{q}(x) =0$,  one obtains $\cB^i_{\Phi,q}(x)<0$, contradiction.
 \medskip
%%%%%%%%%%%%%%%%%%%

\noindent (2) Smith (Sandholm \cite{San09,San11}): Assume $x\in N\!E(\Phi)$. Then for all $i\in I$ and $q \in S^i$, either $x^i_q=0$ or $\Phi^i_q(x)\geq  \Phi^i_{p}(x)$ for all $p\in S^i$. Hence $\cB_\Phi(x)=0$.

Reciprocally, assume the existence of $i\in I$ and $p\in S^i$ such that  $x^i_p >0$ and $[\Phi^i_q (x) - \Phi^i_p (x)]^+ >0$. Choose such a $p$ with smallest $\Phi^i_p (x)$ then one obtains $\cB^i_{\Phi,p}(x)<0$.
\medskip
%%%%%%%%%%%%%%%%%%%%

\noindent (3) LP (Lahkar and Sandholm \cite{LahkarSand2008}, Sandholm \cite{San11}): The result follows from \eqref{TC}.
%Assume $x\in N\!E(F)$, i.e. $\langle F(x), x-y \rangle \geq 0$, for all $y\in X$, which is equivalent to $F(x) \in N_x(X)$. This is further equivalent to$F(x)= \Pi_{N_x(X)}[F(x)]$ or still $\Pi_{T_x(X)}[F(x)]=0$, since $F(x)=\Pi_{T_x(X)}[F(x)]+\Pi_{N_x(X)}[F(x)]$. 
 \medskip
 
%%%%%%%%%%%%%%%%%%%%
\noindent (4) GP (Tsakas and Voorneveld \cite{TsakasVoor2009}): The result follows from \eqref{Proj}.
%Theorem \ref{thm:ne_exist}, an equilibrium $x$ is a fixed point of map $\Psi$, i.e. $x=\Pi_X(x+F(x))$ and hence $\cB(x)=0$.
\medskip

%%%%%%%%%%%%%%%%%%%
\noindent (5) BR (Sandholm \cite{San11}): By definition $\cB_\Phi(x)=0$ if and only if $x^i \in B\!R^i (x)$ hence $x \in N\!E (\Phi)$.
\medskip

\noindent (6) RD (Sandholm \cite{San09,San11}): Assume $x\in \inte X \cap N\!E(\Phi)$. Then, $\Phi^i_p(x)=\overline{\Phi}^i(x)$ for all $i \in I$ and $p\in S^i$ and thus $\cB_\Phi(x)= 0$.

Reciprocally, if $x\in \inte X$ and $\cB_\Phi(x)=0$, $x$ is equalizing hence in $N\!E(\Phi)$.  
\end{proof}

\subsection{Potential games}$ $\\
We establish here results that are valid for all three frameworks of games.

\begin{pro} \label{prop:pot_lya}
Consider a  potential game $\Gamma(\Phi)$ with potential function $W$. If the dynamics $\dot{x}=\cB_\Phi(x)$  satisfies %Nash stationarity and
 (PC), then $W$ is a strict  Lyapunov function for $\cB_\Phi$. Besides, all $\omega$-limit points are rest points of $\cB_\Phi$. 
\end{pro}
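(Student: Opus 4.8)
The plan is to show directly that $W$ increases along every solution of $\dot{x} = \cB_\Phi(x)$, strictly so away from rest points, which is exactly the assertion that $W$ is a strict Lyapunov function. Differentiating along a trajectory and using the chain rule gives $\frac{d}{dt} W(x(t)) = \langle \nabla W(x), \cB_\Phi(x) \rangle = \sum_{i \in I} \langle \nabla^i W(x), \cB_\Phi^i(x) \rangle$. The crucial observation is that, since $X$ is invariant, each component $\cB_\Phi^i(x)$ lies in the tangent space $X^i_0$. This is precisely the class of test vectors $y^i$ for which the potential identity \eqref{condition:pot} is stated, so I may substitute $y^i = \cB_\Phi^i(x)$ to obtain $\langle \nabla^i W(x), \cB_\Phi^i(x) \rangle = \mu^i(x)\, \langle \Phi^i(x), \cB_\Phi^i(x) \rangle$ for every $i$.

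Plugging this in yields $\frac{d}{dt} W(x(t)) = \sum_{i \in I} \mu^i(x)\, \langle \Phi^i(x), \cB_\Phi^i(x) \rangle$. Now I invoke (PC): for each $i$ with $\cB_\Phi^i(x) \neq 0$ the inner product $\langle \Phi^i(x), \cB_\Phi^i(x) \rangle$ is strictly positive, whereas for $\cB_\Phi^i(x) = 0$ the corresponding term vanishes. Since every $\mu^i(x)$ is strictly positive by the definition of a potential, each summand is nonnegative, so $\frac{d}{dt} W \geq 0$; moreover the whole sum vanishes if and only if $\cB_\Phi^i(x) = 0$ for all $i$, that is, exactly at the rest points of $\cB_\Phi$. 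This establishes the strict Lyapunov property.

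For the second assertion I would appeal to the standard consequence of possessing a strict Lyapunov function on the compact invariant set $X$. Along any trajectory $W(x(t))$ is nondecreasing and bounded, hence converges to some value $W^*$; consequently $W$ is constant and equal to $W^*$ on the $\omega$-limit set, which is invariant under the flow. If some $\omega$-limit point $x^*$ were not a rest point, then $\frac{d}{dt} W > 0$ there, forcing $W$ to strictly exceed $W^*$ along the trajectory issued from $x^*$ (a trajectory that remains in the $\omega$-limit set by invariance), contradicting $W \le W^*$ on that set. Hence every $\omega$-limit point satisfies $\cB_\Phi(x^*) = 0$.

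I expect the only delicate point to be the justification that $\cB_\Phi^i(x) \in X^i_0$, since it is this tangency that makes the potential identity \eqref{condition:pot} applicable to the vector field itself rather than to an arbitrary admissible direction; once it is in hand, the differentiation step and the LaSalle-type argument for the $\omega$-limit points are routine.
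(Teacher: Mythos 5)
Your proof is correct and follows essentially the same route as the paper: differentiate $W$ along trajectories, use that $\dot{x}^i_t \in X^i_0$ so the potential identity \eqref{condition:pot} applies with $y^i = \cB^i_\Phi(x)$, and invoke (PC) together with $\mu^i > 0$ to get $\dot{W} \geq 0$ with equality exactly at rest points. The only difference is cosmetic: where the paper concludes the $\omega$-limit statement by citing Lyapunov's theorem, you spell out the same standard LaSalle-type argument explicitly.
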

\begin{proof}
%Only the one population case is proved since the multipopulation case is a straightforward extension. 
Consider $x\in X$. Let $\{x_t\}_{t\geq 0}$ be the trajectory of $\cB_\Phi$ with initial point $x_0 = x$, and $V_t = W(x_t)$ for $t\geq 0$. Then
\begin{equation*}
 \dot V_t = \langle \nabla W(x_t),  \dot{x}_t \rangle =  \sum_{i\in I}  \langle \nabla^i W(x_t),  \dot{x}^i_t \rangle = \sum_{i\in I} \mu^i(x) \langle  \Phi^i(x_t), \dot{x}^i_t \rangle   \geq 0.
\end{equation*}
(Recall that $\dot{x}_t \in X_0$.) Moreover, $\langle \Phi^i(x_t), \dot{x}^i_t \rangle = 0$ holds for all $i$ if and only if $\dot{x}=\cB_\Phi(x_t) = 0$.

One concludes by using Lyapunov's theorem (e.g.  \cite[Theorem 2.6.1]{HSy}).
%applying Proposition 1 in Akin and Losert \cite{AkinLos1984}, which states that the set of $\omega$-limit points of a dynamics possessing such a Lyapunov function $W$ is a compact, connected set consisting entirely of rest points and upon which $W$ is constant.
\end{proof}
  
This result is proved by Sandholm \cite{San00} for the version of potential games in framework $\mathbf{I}$ defined by \eqref{condition:2}. 
\bigskip
%The convergence of the replicator dynamics  is proved  in Monderer and Shapley \cite{MS1996} for potential games of class I  (with $\mu^i = 1$). 

It follows that, with the appropriate definitions,  the convergence results established for several dynamics and  potential games  in framework $\mathbf{I}$  or $\mathbf{III}$ extend to all dynamics and frameworks. Explicitly:
\begin{pro} \label{prop:potential_lyapunov}
Consider a potential game $\Gamma(\Phi)$ with potential function $W$.\\
 If the dynamics is (RD), (BNN), (Smith), (LP),  (GP) or (BR),  $W$ is a strict  Lyapunov function for $\cB_\Phi$. \\
In addition, except for (RD),   all $\omega$-limit points are equilibria of $\Gamma(\Phi)$. 
\end{pro}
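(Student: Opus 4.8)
The plan is to assemble the statement from the three preceding propositions rather than to argue anything from scratch, since each of the six dynamics has already been analyzed individually. The key observation is that Proposition \ref{prop:pot_lya} reduces everything about the Lyapunov property to verifying (PC), and reduces the identification of $\omega$-limit points to combining ``rest point'' with Nash stationarity. So the proof should be essentially a bookkeeping synthesis.

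For the first assertion I would simply invoke Proposition \ref{prop:PC}, which states that all of (RD), (BNN), (Smith), (LP), (GP) and (BR) satisfy (PC). Proposition \ref{prop:pot_lya} then applies verbatim to each of them: in a potential game the (PC) property forces $\dot V_t = \sum_{i\in I} \mu^i(x_t)\langle \Phi^i(x_t), \dot x^i_t\rangle \geq 0$, with equality precisely at rest points, so that $W$ is a strict Lyapunov function for $\cB_\Phi$. This half requires no new computation at all.

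For the second assertion, Proposition \ref{prop:pot_lya} already guarantees that every $\omega$-limit point is a rest point of $\cB_\Phi$. It therefore remains only to turn ``rest point'' into ``equilibrium'', which is exactly the content of Nash stationarity. Here I would appeal to Proposition \ref{prop:NS}: for (BNN), (Smith), (LP), (GP) and (BR), Nash stationarity holds on all of $X$, so each rest point, and in particular each $\omega$-limit point, lies in $N\!E(\Phi)$.

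The only delicate point, and the reason (RD) is singled out, is that for the replicator dynamics Nash stationarity holds only on $\inte X$ (again by Proposition \ref{prop:NS}): a boundary rest point of (RD) need not satisfy \eqref{basic}, since there the rest condition $x^i_p[\Phi^i_p(x)-\overline\Phi^i(x)]=0$ is compatible with $\Phi^i_p(x)>\overline\Phi^i(x)$ whenever $x^i_p=0$. Thus one cannot upgrade ``$\omega$-limit point'' to ``equilibrium'' for (RD) without an extra argument, which is precisely why it is excluded from the second claim. I do not expect any genuine obstacle; the whole argument is a synthesis of the three earlier propositions, and the subtle step is merely remembering to drop (RD) on the boundary.
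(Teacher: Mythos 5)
Your proposal is correct and matches the paper's intended argument exactly: the paper states this proposition without a separate proof precisely because it is the synthesis you describe, combining Proposition \ref{prop:PC} (all six dynamics satisfy (PC)) with Proposition \ref{prop:pot_lya} (under (PC), $W$ is a strict Lyapunov function and $\omega$-limit points are rest points) and Proposition \ref{prop:NS} (Nash stationarity on $X$ for all dynamics except (RD), which holds only on $\inte X$). Your remark on why (RD) must be excluded from the second claim---boundary rest points with $x^i_p=0$ but $\Phi^i_p(x)>\overline{\Phi}^i(x)$ need not be equilibria---is exactly the right justification.
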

\medskip
%%%%%%%%%%%%%%%%%%%%%%%%%%%%%%%%%%%%%%%%%%%%%%%%%%%%%%%%%%%%%%%%%%%%%%%%%%%%%%%%%%%%%%%%%%%%%%%%%%%%%%%%%%%%%%%%%%%%%%%%%%%%%%%%%%%%%%%%%%%%%%%%%%%%%%%%%%%%%%%%%%%%%%%%%%%%%%%%%%%%%%%%%%%%%%%%%%%%%%%%%%%%%%%%

\subsection{Dissipative games}      

We apply also for this class the previous ``dictionary" used for potential games and dynamics.

\begin{pro} \label{prop:dissip_lya}
Consider  a dissipative game $\Gamma(\Phi)$.
\smallskip

% the following functions are respectively a Lyapunov function for (RD), (BNN), (Smith), (LP), (GP) and (BR). (For (BNN), (Smith), (GP) and (BR), $\Phi$ is assumed to be of class $\mathcal{C}^1$ on a neighborhood $\Omega$ of $X$.)

%If $\Gamma$ is strictly dissipative, then they are global strict Lyapunov functions.
% ($\dot{h}\leq 0$ and the equality holds if and only at rest points).       
%      A Lyapunov function for equilibrium $x^* \in X$ is the following for different dynamics.
\noindent (1) RD: Let $x^*\in N\!E(\Phi)$. Define \cite{HofSan09}:
 \[H(x)= \sum_{i \in I}\sum_{p\in \textrm{supp}(x^{i*})}x^{i*}_p \, \ln \frac{x^{i*}_p}{x^{i}_p}. \]
Then $H$ is a local Lyapunov function.\\
If $\Gamma(\Phi)$ is strictly dissipative, then $H$ is a local strict Lyapunov function. 
\medskip

\noindent (2) BNN: Assume  $\Phi$  $\mathcal{C}^1$ on a neighborhood $\Omega$ of $X$. Define \cite{Smith1983b,Smi1984a,Hofb2000}:
\[H(x) = \frac{1}{2}\sum_{i \in I}\sum_{p\in S^{i}}\hat{\Phi}^{i}_p(x)^{2}. \]
Then $H$ is a strict Lyapunov function which is minimal on $N\!E(\Phi)$.
\medskip

\noindent (3) Smith: Assume  $\Phi$  $\mathcal{C}^1$ on a neighborhood $\Omega$ of $X$. Define  \cite{Smi84}:
\[H(x) = \sum_{i \in I}\sum_{p,q\in S^{i}} x^{i}_p \big\{[\Phi^{i}_q(x)-\Phi^{i}_p(x)]^{+}\big\}^{2}. \]
Then $H$ is a strict Lyapunov function which is minimal on $N\!E(\Phi)$.
\medskip

\noindent (4) LP: Let $x^*\in N\!E(\Phi)$. Define \cite{NaguZhang97, ZhangNa1997,PappaPassa2002}:
\[H(x) = \frac{1}{2} \|x-x^*\|^2. \]
Then $H$ is a Lyapunov function.\\
If $\Gamma(\Phi)$ is strictly dissipative, then $H$ is a strict Lyapunov function. 
\medskip

\noindent (5) GP: Assume  $\Phi$  $\mathcal{C}^1$ on a neighborhood $\Omega$ of $X$. Define \cite{PappaPassa2004}:
\[H(x) = \sup_{y\in X} \langle\, y-x, \Phi(x) \,\rangle - \frac{1}{2}\|y-x\|^2. \]
Then $H$ is a Lyapunov function.\\
If $\Gamma(\Phi)$ is strictly dissipative, then $H$ is a strict Lyapunov function. 
%\footnote{\textcolor{magenta}{I still cannot understand this statement in the literature: $\Phi$ dissipative  is not sufficient here. A sufficient condition for $h$ to be a Lyapunov function is that the Jacobian matrix of $\Phi$ is semi definite negative.}}
\medskip

\noindent (6) BR:  Assume  $\Phi$  $\mathcal{C}^1$ on a neighborhood $\Omega$ of $X$. Define \cite{HofSan09}:
\[H(x) = \sup_{y\in X} \langle\, y-x, \Phi(x) \,\rangle. \]
Then $H$ is a strict Lyapunov function which is minimal on $N\!E(\Phi)$.
\end{pro}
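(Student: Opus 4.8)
The plan is to treat the six dynamics in two groups according to the structure of the candidate Lyapunov function $H$, and in each case to differentiate $H$ along a trajectory $x_t$ of $\cB_\Phi$ and show $\dot H \le 0$, with strict inequality off the rest points under the stated hypothesis. Throughout I will use three ingredients already available: the identity $S\!N\!E(\Phi)=N\!E(\Phi)$ for dissipative games (Proposition \ref{lm:sne}), which turns the defining inequality of $S\!N\!E$ into a usable bound at an equilibrium; the positive-correlation identities computed in the proof of Proposition \ref{prop:PC}, which express $\langle\cB_\Phi^i(x),\Phi^i(x)\rangle$ as an explicit nonnegative quantity; and, for the smooth dynamics, the fact that dissipativity forces $J_\Phi(x)$ to be negative semidefinite on $T_X(x)$ (the Proposition preceding the definition of strong dissipativity).

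For the reference-point functions (RD and LP) I fix $x^*\in N\!E(\Phi)=S\!N\!E(\Phi)$. For RD, writing $H$ as the relative entropy, a direct computation gives $\dot H = -\sum_i\langle \Phi^i(x_t), x^{i*}-x^i_t\rangle = \langle \Phi(x_t), x_t-x^*\rangle$; since $x^*\in S\!N\!E(\Phi)$, the inequality $\langle \Phi(x_t), x^*-x_t\rangle\ge 0$ yields $\dot H\le 0$. For LP, $\dot H=\langle x_t-x^*, \Pi_{T_X(x_t)}\Phi(x_t)\rangle$, and Moreau's decomposition together with the polarity $\langle T,N\rangle\le 0$ reduces this to $\dot H\le \langle x_t-x^*, \Phi(x_t)\rangle\le 0$ by the same $S\!N\!E$ bound. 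In both cases, under strict dissipativity the estimate $\langle\Phi(x_t)-\Phi(x^*),x_t-x^*\rangle<0$ combined with $x^*\in N\!E(\Phi)$ upgrades $\dot H\le0$ to $\dot H<0$ for $x_t\neq x^*$, giving the strict statements.

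For the intrinsic functions (BNN, Smith, GP, BR) I first record that $H\ge 0$ with $H(x)=0$ exactly on $N\!E(\Phi)$: this minimality follows by recognizing each $H$ as the nonnegative quantity appearing in the positive-correlation identities and invoking the Nash stationarity characterizations of Proposition \ref{prop:NS}. For the decrease I differentiate along the trajectory using the $\mathcal C^1$ assumption on $\Phi$. For GP and BR the supremum defining $H$ is attained at $y(x)=\Pi_X(x+\Phi(x))$, respectively at a best reply, so the envelope theorem lets me differentiate through the explicit $x$-dependence only; organizing the resulting terms produces a quadratic form $\langle\dot x_t, J_\Phi(x_t)\dot x_t\rangle\le 0$ plus a term controlled by the positive-correlation identity. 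For BR this gives cleanly $\dot H = \langle\dot x_t,J_\Phi(x_t)\dot x_t\rangle - H(x_t)\le -H(x_t)<0$ off $N\!E(\Phi)$, hence a strict Lyapunov function; for BNN and Smith the analogous chain-rule computation (handling the $[\,\cdot\,]^+$ pieces on the set where they are differentiable) produces a strictly negative derivative off $N\!E(\Phi)$, while for GP only plain decrease survives under mere dissipativity, strictness requiring the definite contribution available under strict dissipativity.

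The main obstacle is the decrease step for the intrinsic, nonsmooth functions of group B. Two difficulties combine there: differentiating the sup-defined $H$ (GP, BR) rigorously via the envelope/Danskin theorem, and differentiating the squared excess functions $\hat{\Phi}^i_p=[\Phi^i_p-\overline{\Phi}^i]^+$ (BNN, Smith) across the nonsmooth switching set, which is exactly why the hypothesis $\Phi\in\mathcal C^1$ is imposed. The conceptual crux is that after these manipulations the sign of $\dot H$ is governed by the quadratic form $\langle z, J_\Phi(x)z\rangle$ with $z\in T_X(x)$, so the whole argument hinges on dissipativity being precisely the condition making this form $\le 0$; correspondingly, matching the flavor of strictness to the hypothesis — plain dissipativity sufficing for BNN, Smith and BR, but strict dissipativity being needed for GP and for the reference-point functions RD and LP — is the delicate bookkeeping to get right.
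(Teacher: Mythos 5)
Your proposal is correct and takes essentially the same route as the paper's Appendix proof: the paper likewise differentiates each $H$ along trajectories, splits $\dot H$ into the quadratic term $\langle \cB_\Phi(x), J_\Phi(x)\cB_\Phi(x)\rangle \le 0$ on the tangent cone plus terms controlled by the positive-correlation identities (BNN, Smith, BR, and GP via the envelope theorem at $y^*(x)=\Pi_X(x+\Phi(x))$), and for RD and LP bounds $\langle \Phi(x), x-x^*\rangle \le 0$ by exactly the inequality your $S\!N\!E(\Phi)=N\!E(\Phi)$ step encodes, with the same strictness bookkeeping (plain dissipativity suffices for BNN, Smith, BR; strict dissipativity is invoked for RD, LP, GP). The only detail you leave implicit that the paper spells out is the Jensen-inequality verification that the relative entropy in (1) has a strict local minimum at $x^*$ on the neighborhood $\{x \in X:\ \mathrm{supp}(x^*)\subset \mathrm{supp}(x)\}$, which is what makes the RD statement local.
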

The proof is in Apprendix.
%%%%%%%%%%%%%%%%%%%%%%%%%%%%%%%%%%%%%%%%%%%%%%%%%%%%%%%%%%%%%%%%%%%%%%%%%%%%%%%%%%%%%%%%%%%%%%%%%%%%%%%%%%%%%%%%%%%%%%%%%%%%%%%%%%%%%%%%%%%%%%%%%%%%%%%%%%%%%%%%%%%%%%%%%%%%%%%%%%%%%%%%%%%%%%%%%%%%%%%%%%%%%%%%%%%%%%%%%%%%%%%%%%%%%%%%%%%%%%%%%%%%%%%%%%%%%%%%%%%%%%%%%%%%%%%%%%%%%%%%%%%%%%%%%%%%%%%%%%%%%%%%%%%%%%%%%%%%%%%%
\section{Example: congestion games}%$ $ \\
An eminent example of the games studied in this paper is  a network congestion game, or routing game. The underlying network is a finite directed graph $G=(V,A)$,  where $V$ is the set of nodes and   $A$ the set of links. The vector $\bol=(l_a)_{a\in A}$ denotes a family of cost functions from $\rit$  to $\rit^+$: if the aggregate weight on arc $a$ is $m$, the cost per unit (of weight) is $l_{a}(m)$. 

The set $I$ of participants is finite. A participant $i$ is characterized by his {\em weight} $m^{i}$ and an {\em origin/destination pair} $(o^{i}, d^{i})\in V \times V$ such that the constraint is to send a quantity $m^{i}$ from $o^{i}$ to $d^{i}$. The set of choices of participant  $i\in I$ is $S^{i}$: directed acyclic paths linking  $o^{i}$  to $d^{i}$ and available to $i$. Let $P = \cup_{i\in I} S^i$.

Assume that, for all arcs $a \in A$, the  function $l_{a}$ is continuous and finite on a neighborhood $U$ of the  interval $[0,\,M]$ and positive on $U \cap \mathbb{R}_{+}$, where $M=\sum_{i\in I}m^{i}$ is the aggregate weight of the players.

In each of the three frameworks considered in this paper, a participant is respectively a population of nonatomic agents ($\mathbf{I}$), an atomic splittable player ($\mathbf{I\!I}$)  and an atomic non splittable player ($\mathbf{I\!I\!I}$). Thus, in framework $\mathbf{I}$, a fraction $x^i_p$ of population $i$ takes path $p$; in framework $\mathbf{I\!I}$, $x^{i}_{p}$ is the proportion  of the weight $m^i$ sent on path $p$ by player $i$; in framework $\mathbf{I\!I\!I}$, $x^i_p$ is the probability with which player $i$ take path $p$. The basic variable $x$ is 
a profile of strategies of the participants and the corresponding set is $X = \prod_{i\in I} X^i$. 
%also called the configuration of the network, and the feasible configuration set is $X = \prod_{i\in I} X^i$. 

In frameworks $\mathbf{I}$ and $\mathbf{I\!I}$, a strategy  $x^i$ induces a {\em flow $f^i$ on the arcs} (or simply {\em flow}) for each  participant $i$. Explicitly, the weight on arc $a$ %from participant $i$ 
is $f^{i}_{a}=\sum_{p\in S^i,\, p\ni a}m^i x^{i}_{p}$. Define the aggregate configuration $\bz=(\bz_p)_{p\in P}$, where $\bz_p=\sum_{i\in I, S^i \ni p} m^i x^i_p$. The {\it aggregate flow} is $\bof=(\bof_{a})_{a\in A},$ with $\bof_{a} = \sum_{i\in I} f^{i}_{a}$ the aggregate weight on arc $a$. Notice that $\bof$ can also be induced by the aggregate configuration $\bz$. Denote $f^{-i}_a=\bof_a-f^i_a$.

Given an aggregate configuration $\bz$  and aggregate flow $\bof$, the {\em vector of congestion on the arcs} is $\bol(\bof)=\{l_{a}(\bof_{a})\}_{a\in A}$. This specifies now the {\it cost of a path} $p$ by 
$c_{p}(\bz)=\sum_{a\in p}l_{a}(\bof_{a})$. The corresponding vectors are $c^i (\bz) = (c_{p}(\bz))_{p\in S^i}$ for $i\in I$, $c(\bz)=(c^i(\bz))_{i\in I}$, and $l^i(\bof)=\bol(\bof)$ for all $i\in I$. In particular, path costs $c_p$ and arc costs $l_a$ are determined only by the {\emph aggregate} configuration or the {aggregate} flow.

The evaluation functions in the first two frameworks are respectively:
\begin{description}
\item[$\mathbf{I}$] population: $\Phi^i_p(x)=-c_p(\bz)$.
\item[$\mathbf{I\!I}$] atomic splittable: $\Phi^i_p(x)= - \frac{\partial u^i(x)}{\partial x^i_p}$, where $u^i(x)$ is the cost to atomic player $i$:
\begin{equation*}
 u^{i}(x) =\langle x^{i},\, c^i(\bz) \rangle =\sum_{p \in S^{i}} x^{i}_{p} \,c_{p} (\bz) = \frac{1}{m^i} \langle f^{i},\, \bol(\bof) \rangle = \frac{1}{m^i}\sum_{a\in A} f^{i}_{a}\,l_{a}(\bof_{a}).%\label{Nash_path}
\end{equation*}
% or doing the computation on the arcs: 
%\begin{equation*}%\label{cost_atomic_split}
%v^{i}(f) = \frac{1}{m^i} \langle f^{i},\, \bol(\bof) \rangle = \frac{1}{m^i}\sum_{a\in A} f^{i}_{a}\,l_{a}(\bof_{a}).
%\end{equation*}
%and one has  $u^{i}(x)=v^{i}(Dx)$.\\
\end{description}

In framework $\mathbf{I\!I\!I}$, first consider the arc flow $f$ and aggregate arc flow $\mathbf{f}$ induced by a pure-strategy profile $s$: $\mathbf{f}_a(s)=\sum_{i\in I}f^i_a(s)$ and $f^i_a(s)=m^i \ind_{a \in s^i}$. Then %in this atomic non splittable case,
 the evaluation function is $\Phi^i_p(x)=-V\!U^i_p(p,x^{-i})=-U^i(p,x^{-i})$, where
\begin{equation*}
U^i(x)=  \sum_{s \in \prod_{j\in I} S^j}\prod_{j\in I}x^j_{s^j} \sum_{a\in s^i}l_a(\mathbf{f}_a(s))
\end{equation*}
\smallskip

Congestion games  are thus natural  settings where each kind of participants occurs. However one can even consider a game where   participants  of different natures coexist:  some of them being of category $\mathbf{I}$,  $\mathbf{I\!I}$ or $\mathbf{I\!I\!I}$. This leads to the notion of composite game which is introduced in full generality in Section 6. Consider here a simple example where there are three participants in the network: population 1 of nonatomic agents of weight $m^1$, atomic splittable player 2 of weight $m^2$, and atomic non splittable player $3$ of weight $m^3$. Suppose that their basic variables are $x^1=(x^1_p)_{p\in S^1}$, $x^2=(x^2_q)_{q\in S^2}$ and $x^3=(x^3_s)_{s\in S^3}$ respectively. Here $x^1$ and $x^2$ describe pure strategies while $x^3$ specifies a mixed strategy.

Let us first look at the pure-strategy profiles. Given a pure strategy $s\in S^3$,  let  $x=(x^1, x^2, s)$, $\bof(x)$ be the induced aggregate flow: $\bof_a(x) = \sum_{p\in S^1,\, p\ni a}m^1 x^{1}_{p} + \sum_{q\in S^2,\, q \ni a}m^2 x^{2}_{q} +  m^3  \ind_{\{a \in s \}}$ and $\bz$  the induced configuration. The corresponding cost of a path $p$ is $ c_p (\bz) = \sum_{a\in p}l_{a}(\bof_a(x))$. Therefore the cost to an agent in population 1 using path $p$ (if $x^1_p>0$) is $c_p(\bz)$ for all $p\in S^1$, the cost to  atomic splittable player 2 is $u^2(x) = \sum_{q\in S^2} x^2_q c_q(\bz)$, and the cost to atomic non splittable player 3 is $u^3(x)=c_s(\bz)$.

%
% \textcolor{red}{Before computing the evaluation functions and payoffs of the three participants with $x=(x^1,x^2,x^3)$, let us first look at the pure-strategy profiles. For each pure strategy $s\in S^3$ of atomic non splittable player 3, denote the pure-strategy profile where participants $1$ and $2$ play $x^1$ and $x^2$ while participant 3 play $s$ by $z_s=(x^1, x^2, s)$. Let $\bof(z_s)$ be the aggregate flow induced by the configuration $z_s$: $\bof_a(z_s) = \sum_{p\in S^1,\, p\ni a}m^1 x^{1}_{p} + \sum_{q\in S^2,\, q \ni a}m^2 x^{2}_{q} +  m^3  \ind_{\{a \in s \}}$. The corresponding cost of a path $p$ is $c_{p}(z_s)=\sum_{a\in p}l_{a}(\bof_a(z_s))$. Therefore, given configuration $z_s$, the cost to an agent in population 1 using path $p$ (if $x^i_p>0$) is $c_p(z_s)$ for all $p\in S^1$, the cost to atomic splittable player 2 is $u^2(z_s) = \sum_{q\in S^2} x^2_q c_q(z_s)$, and the cost to atomic non splittable player 3 is $U^3(z_s)=c_s(z_s)$.}
% 
% 

Consider now a strategy profile $x=(x^1,x^2,x^3)$.  This induces a distribution on pure strategy profiles $(x^1,x^2, s)$ hence  a distribution on the sets of aggregate distributions $\bz _s$. The cost to each player is then obtained by taking the relevant expectation. Explicitly, 
for population 1, the cost to an agent in the population using path $p$ (if $x^1_p>0$) is $\sum_{s\in S^3}x^3_s c_p(\bz_s)$, while its evaluation function is $\Phi^1(x)=(\Phi^1_p(x))_{p\in S^1}$ where $\Phi^1_p(x)=-\sum_{s\in S^3}x^3_s c_p(\bz_s)$. For atomic splittable player 2, his cost is $u^2(x)   = \sum_{s\in S^3}x^3_s  \sum_{q\in S^2} x^2_q c_q(\bz_s)$, while his evaluation function is $\Phi^2(x)=(\Phi^2_q(x))_{q\in S^2}$ where $\Phi^2_q(x)=-\frac{\partial u^2(x)}{\partial x^2_q}$. For atomic non splittable player 3, his cost is $u^3(x)=  \sum_{s \in  S^3}x^3_{s} c_s(\bz_s)$, while his evaluation function is $\Phi^3(x)=(\Phi^3_s(x))_{s\in S^3}$ where $\Phi^3_s(x)= -c_s(\bz_s)$.

Through this example, one can further see that congestion games are a natural example of an aggregative  game (see \cite{Se}) where the payoff of a participant $i$ depends only on $x^i \in X^i$ and  on some fixed dimensional   function $\alpha^i (\{x^j\}_{j \not=i}) \in\Delta (\rit^P)$ (here the aggregate distribution induced by the participants $-i$). Because of the aggregative property of congestion games, one can show that accumulation points of flows induced by Nash equilibria for a sequence of  composite congestion games, when the atomic players split into identical players with vanishing weights, are Wardrop equilibria of the `limit' nonatomic game, i.e. the nonatomic game  obtained where the atomic splittable players in the previous sequences games are replaced by populations of nonatomic agents. This result shows intrinsic link between the different frameworks discussed in this paper. The reader is referred to  Haurie and Marcotte~\cite{Hau85} or Wan \cite{Wan11} for details.
\medskip

In framework $\mathbf{I\!I}$, $u^i$ is of class $\mathcal{C}^1$ and convex when the arc cost functions satisfy a mild condition, as the following lemma shows \cite[Lemma 29]{Wan13}. 
\begin{pro}\label{u_convex_i}
In $\Gamma(\Phi)$, if  each  cost function $l_{a}$ is of class $\mathcal{C}^1$, nondecreasing and convex on $U$, for all arc $a\in A$, then $u^{i}(x^{i},\,x^{-i})$ is convex with respect to $x^{i}$ on a neighborhood of $X^{i}$ for all fixed $x^{-i}\in X^{-i}$.
\end{pro}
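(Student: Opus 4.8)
The plan is to exploit the fact that, for fixed $x^{-i}$, the dependence of $u^i$ on $x^i$ enters only through the arc flows, which are affine in $x^i$, and to reduce the statement to a one–dimensional convexity property of the map $t\mapsto t\,l_a(t+b)$ for each arc. First I would fix $x^{-i}\in X^{-i}$, so that each $f^{-i}_a$ is a constant $b_a$, while $f^i_a=m^i\sum_{p\in S^i,\,p\ni a}x^i_p$ is a \emph{linear} function of $x^i$ and $\bof_a=f^i_a+b_a$ is affine in $x^i$. Using the arc expression of the cost,
\[
m^i\,u^i(x)=\sum_{a\in A}f^i_a\,l_a(\bof_a)=\sum_{a\in A}\psi_a\big(f^i_a\big),\qquad \psi_a(t):=t\,l_a(t+b_a).
\]
This writes $u^i(\cdot,x^{-i})$ as a nonnegative combination of the scalar functions $\psi_a$ precomposed with the linear map $x^i\mapsto(f^i_a)_{a\in A}$. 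Since convexity is preserved under precomposition with an affine map and under nonnegative sums, it suffices to show that each $\psi_a$ is convex on the range of $f^i_a$. On $X^i$ one has $f^i_a\ge 0$ (a nonnegative combination of the $x^i_p$), and for $x^{-i}\in X^{-i}$ the arguments $t+b_a$ stay in the neighborhood $U$ on which $l_a$ is $\mathcal{C}^1$, nondecreasing and convex.

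The heart of the matter is then the scalar claim: $\psi_a(t)=t\,l_a(t+b_a)$ is convex for $t\ge 0$. Because $l_a$ is only assumed $\mathcal{C}^1$, I would not use a second derivative but instead show that $\psi_a'$ is nondecreasing. From $\psi_a'(t)=l_a(t+b_a)+t\,l_a'(t+b_a)$, take $0\le s<t$ with both arguments in $U$ and write
\[
\psi_a'(t)-\psi_a'(s)=\big[l_a(t+b_a)-l_a(s+b_a)\big]+t\big[l_a'(t+b_a)-l_a'(s+b_a)\big]+(t-s)\,l_a'(s+b_a).
\]
The first bracket is $\ge 0$ because $l_a$ is nondecreasing; the second is $\ge 0$ because $t\ge 0$ and $l_a'$ is nondecreasing (as $l_a$ is convex of class $\mathcal{C}^1$); the last term is $\ge 0$ because $t-s>0$ and $l_a'\ge 0$ (again as $l_a$ is nondecreasing). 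Hence $\psi_a'$ is nondecreasing and $\psi_a$ is convex, and summing over $a$ gives the convexity of $u^i(\cdot,x^{-i})$.

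The main obstacle is precisely the $\mathcal{C}^1$-only hypothesis: it rules out the direct computation $\psi_a''(t)=2\,l_a'(t+b_a)+t\,l_a''(t+b_a)\ge 0$ and forces the monotone–derivative argument above. The second point to keep track of is the nonnegativity $f^i_a\ge 0$, which is exactly what makes the product $t\,l_a'(t+b_a)$ monotone in $t$; this holds throughout $X^i$, which is why convexity is asserted on a neighborhood of $X^i$ kept inside the nonnegative–flow region.
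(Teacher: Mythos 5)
Your reduction is the right one, and it is essentially the argument behind the result (the paper itself gives no proof, deferring to \cite[Lemma 29]{Wan13}): fix $x^{-i}$, write $m^i u^i(\cdot,x^{-i})=\sum_{a}\psi_a(f^i_a)$ with $\psi_a(t)=t\,l_a(t+b_a)$ and $f^i_a$ linear in $x^i$, and prove convexity of each $\psi_a$ by showing $\psi_a'$ is nondecreasing rather than differentiating twice. Your three-term decomposition of $\psi_a'(t)-\psi_a'(s)$ is algebraically correct, and each term is indeed nonnegative \emph{when} $0\le s<t$; this cleanly handles the $\mathcal{C}^1$-only hypothesis and establishes convexity of $u^i(\cdot,x^{-i})$ on $X^i$.

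There is, however, one genuine gap, and it sits exactly where you tried to wave it away: the proposition asserts convexity on a \emph{neighborhood} of $X^i$, and your closing claim that such a neighborhood can be ``kept inside the nonnegative--flow region'' is false in general. Whenever some arc $a$ is not contained in every path of $S^i$, every neighborhood of $X^i$ contains points with some $x^i_p<0$ and hence $f^i_a<0$ (e.g.\ near a vertex of the simplex supported on a path avoiding $a$). For $t<0$ your key step breaks down: the middle term $t\,[\,l_a'(t+b_a)-l_a'(s+b_a)\,]$ has the wrong sign, and this is not merely a defect of the method. Under the stated $\mathcal{C}^1$ hypotheses one can construct admissible costs for which $\psi_a$ fails to be convex on every interval $(-\delta,0)$: let $l_a'$ climb to its value $c=l_a'(b_a)$ through infinitely many ramps of heights $A_k$ and widths $w_k$ accumulating at $b_a$; across the $k$-th ramp, located at $t_k\to 0^-$,
\begin{equation*}
\psi_a'(t)-\psi_a'(s)\;\le\; 2c\,w_k \;-\; |t_k|\,A_k\;<\;0
\end{equation*}
once $w_k$ is chosen small enough, so $\psi_a'$ decreases arbitrarily close to $0^-$. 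Thus the neighborhood version genuinely needs more than $\mathcal{C}^1$ (it does hold, for fixed $x^{-i}$, if for instance $l_a$ is $\mathcal{C}^2$, or $l_a'$ is locally Lipschitz and bounded away from $0$ where needed), or else the conclusion should be read as convexity on $X^i$ together with $\mathcal{C}^1$-smoothness on a neighborhood --- which is in fact all that the paper's framework $\mathbf{I\!I}$ uses. You were right that the $\mathcal{C}^1$-only hypothesis is the heart of the difficulty, but its real bite is at the negative-flow boundary, not in the scalar computation you fixed.
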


\section{Composite Games}
%\subsection{Example: congestion game}$ $ \\
\subsection{Composite games and variational inequalities}$ $ \\
We have seen that the properties of equilibrium and dynamics in the three frameworks all depend on the evaluation function $\Phi$ and the variational inequalities associated to it. Based upon this idea, let us define a more general class of games called {\em composite games}, which exhibit different categories of players. Composite congestion games with  participants of categories $\mathbf{I}$ and $\mathbf{I\!I}$ have been studied by Harker \cite{Ha88}, Boulogne et al. \cite{Bou02}, Yang and Zhang \cite{YangZh2008} and Cominetti et al. \cite{Com09},  among others.

Consider a finite set $I_1$ of populations composed of nonatomic agents, a finite set $I_2$ of atomic splittable players and a finite set $I_3$ of atomic non splittable players. Let $I=I_1\cup I_2\cup I_3$. 
%Participant $i$'s weight is $m^i>0, i \in I$. 

All the analysis of Sections 3 and 4 extend to this setting where $x = \{x^i\}_{i \in I_1\cup I_2\cup I_3}$ and $\Phi^i (x)$ depends upon the category of participant $i$. 

Explicitly, there are finitely many ``participants" $i\in I$ and each of them has finitely many ``choices" $p\in S^i$. Vector $x^i = \{x^{i}_p,\, p \!\in\! S^i\}$ belongs to simplex $X^i = \Delta(S^i)$ on $S^i$ and $X = \prod_{i\in I} X^i$.\\
For $i \in I_1$, the payoff $F^i_p, p \in S^i$ is a continuous function  on $X$ and $\Phi^i = F^i$.\\
For $i \in I_2$, $F^i_p$, $p \in S^i$ is a continuous function on $X$ and the payoff $H^i (x) = \langle x^i, F^i(x) \rangle$  is concave and of class $\mathcal{C}^1$ on a neighborhood of $X^i$. Then $\Phi^i = \nabla^i H^i$.\\
For $i  \in I_3$,  $V\!G^i_p$ is continuous on $X^{-i}$, the payoff is $G^i (x) = \langle x^i, V\!G^i (x^{-i}) \rangle$ and $\Phi^i = V\!G^i$.

Let this composite game be denoted by $\Gamma(\Phi)$.

The main point to note is that the evaluation by participant $i$ is independent of the category of his/her opponent  (their evaluation functions $\Phi ^{-i}$) hence  the analysis of Section 2 implies the following.

\begin{pro}\label{VIP-compo}
 $x\in X$ is a composite equilibrium of a composite game $\Gamma(\Phi)$ if and only if 
\begin{equation}%\label{cd:eq_compsite}
\langle \Phi(x), x-y \rangle \geq 0, \qquad \forall  y \in X.
\end{equation}
\end{pro}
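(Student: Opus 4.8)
The plan is to reduce the composite statement to the three per-participant variational characterizations already established, exploiting the decoupling across categories. The definition of a composite equilibrium asks that each participant $i$ best-responds within its own category, meaning that $x^i$ is optimal against the fixed environment $x^{-i}$ in the sense appropriate to $i\in I_1$, $i\in I_2$, or $i\in I_3$. The key observation, emphasized in the paragraph preceding the statement, is that participant $i$'s evaluation $\Phi^i(x)$ depends only on $x$ and not on the \emph{category} of the opponents; what differs between frameworks is only how $\Phi^i$ is constructed and what optimality of $x^i$ means. So the equilibrium condition splits as a conjunction over $i\in I$ of three separate one-player conditions, each of which we already know to be equivalent (or, in case $\mathbf{I\!I}$, equivalent under concavity) to the individual inequality $\langle \Phi^i(x), x^i - y^i\rangle \geq 0$ for all $y^i\in X^i$.

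First I would make precise the definition of composite equilibrium as the profile $x$ for which, simultaneously, each $i\in I_1$ satisfies the Wardrop condition \eqref{Wardropdef}, each $i\in I_2$ satisfies the Nash optimality \eqref{Chucun_Nash_path}, and each $i\in I_3$ satisfies the best-reply condition against $x^{-i}$. Then I would invoke the three propositions already proved: Proposition~\ref{prop:vip_wardrop} gives, for $i\in I_1$, the equivalence of \eqref{Wardropdef} with $\langle F^i(x), x^i - y^i\rangle\geq 0$ for all $y^i\in X^i$; Proposition~\ref{prop:vip_atomicsplittable}, together with the standing assumption that each $H^i$ is concave in $x^i$, gives for $i\in I_2$ the equivalence of \eqref{Chucun_Nash_path} with $\langle \nabla^i H^i(x), x^i-y^i\rangle\geq 0$; and the derivation in framework $\mathbf{I\!I\!I}$ gives for $i\in I_3$ the equivalence of the best-reply condition with $\langle V\!G^i(x^{-i}), x^i-y^i\rangle\geq 0$. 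In every case the per-participant condition reads $\langle \Phi^i(x), x^i-y^i\rangle\geq 0$ for all $y^i\in X^i$, uniformly across categories.

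The final step is to assemble these $|I|$ individual inequalities into the single aggregate inequality over $X$. Since $X=\prod_{i\in I} X^i$ is a product and the pairing decomposes as $\langle \Phi(x), x-y\rangle = \sum_{i\in I}\langle \Phi^i(x), x^i-y^i\rangle$, the aggregate inequality $\langle \Phi(x), x-y\rangle\geq 0$ holds for all $y\in X$ if and only if each summand is nonnegative for all $y^i\in X^i$. The ``if'' direction is immediate by summing; for the ``only if'' direction one fixes a single coordinate $i$, varies $y^i$ freely in $X^i$, and keeps $y^j=x^j$ for $j\neq i$, so that all but the $i$-th summand vanish, recovering the individual inequality. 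This product-separation argument is exactly the device used in \eqref{VIP_Wardrop_path} and \eqref{VIP_matrix}.

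The only genuine subtlety—the step I expect to be the main obstacle—is the role of concavity in category $\mathbf{I\!I}$. For $i\in I_1$ and $i\in I_3$ the equivalence between the category-specific equilibrium condition and the individual variational inequality is unconditional, but for $i\in I_2$ Proposition~\ref{prop:vip_atomicsplittable} only gives one implication in general and the full equivalence under concavity of $H^i$ in $x^i$. Here I would simply point out that the standing hypothesis on the admissible class of games (namely that each atomic splittable $H^i$ is concave and $\mathcal{C}^1$, as stipulated in the setup of $\Gamma(\Phi)$ for composite games) supplies precisely this concavity, so the equivalence is available for every $i\in I_2$ and the argument closes without further conditions.
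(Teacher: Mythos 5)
Your proposal is correct and takes essentially the same route as the paper, whose entire justification is the remark that each participant's evaluation $\Phi^i$ is independent of the opponents' categories, so that the per-participant characterizations already established (Proposition~\ref{prop:vip_wardrop} for $I_1$, Proposition~\ref{prop:vip_atomicsplittable} with the standing concavity assumption on $H^i$ for $I_2$, and the identity \eqref{VIP_matrix} for $I_3$) combine over the product $X$. You have merely spelled out the product-separation step (setting $y^j=x^j$ for $j\neq i$) and correctly flagged that the concavity needed for the $I_2$ equivalence is supplied by the definition of the admissible class of composite games, both of which are implicit in the paper's appeal to Section~2.
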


An example of such a composite game is a congestion game with the three categories of participants in a network (see Section 5). 
%where the  arc cost function $l_{a}$ for all  $a\in A$ is of class $\mathcal{C}^1$, nondecreasing and convex on $U$, a neighborhood of $[0,M]$, where $M$ is the total weight of the players. Indeed, 
%It is clear that Proposition \ref{u_convex_i} still holds in  the presence of   populations and of  atomic non splittable players in addition to  splittable players.

\medskip

\subsection{Composite potential games and composite dynamics}$ $\\
Once the equilibria of a composite  game are formulated in terms of solutions of  variational inequalities, those properties of equilibria and dynamics based on such a formulation in the three frameworks discussed so far are naturally inherited in the composite setting.

The general form of a dynamics in a composite  game $\Gamma(\Phi)$ is again 
\begin{equation*}
\dot{x}=\cB_{\Phi}(x).
\end{equation*}
The definitions of positive correlation and Nash stationarity for $\cB_{\Phi}$ are exactly the same as in Definition \ref{def:PCNS}.

Arguments similar to those for Propositions \ref{prop:PC} and \ref{prop:NS} show the following.
\begin{pro}\label{prop:specific}
In composite  game $\Gamma(\Phi)$, composite dynamics (Smith), (BNN), (LP), (GP) and (RD) satisfy (PC) and Nash stationarity on $X$.\\
 (RD) satisfies (PC) and Nash stationarity on $\text{int} X$.
\end{pro}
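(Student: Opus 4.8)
The plan is to observe that nothing in the proofs of Propositions \ref{prop:PC} and \ref{prop:NS} ever uses the category of a participant: both the six dynamics and the two properties are defined one participant at a time, and the contribution of participant $i$ involves only the vector $\Phi^i(x)$ and the simplex $X^i=\Delta(S^i)$. In a composite game $\Phi^i$ is simply $F^i$, $\nabla^i H^i$ or $V\!G^i$ according to whether $i\in I_1$, $i\in I_2$ or $i\in I_3$, but in all three cases $\Phi^i(x)$ is an ordinary element of $\rit^{S^i}$, so the algebraic manipulations carry over verbatim. I would therefore reduce the statement to the per-participant computations already performed, the only genuinely composite ingredient being the separability across participants of the equilibrium condition, which is exactly Proposition \ref{VIP-compo}.

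For positive correlation, the definition requires $\langle \cB_\Phi^i(x),\Phi^i(x)\rangle>0$ for each $i$ with $\cB_\Phi^i(x)\neq 0$, so the property is already phrased participant by participant. For each of (RD), (BNN), (Smith), (LP) and (GP) the vector $\cB_\Phi^i(x)$ is built solely from $\Phi^i(x)$ and $x^i$; hence the identity obtained in the corresponding case of the proof of Proposition \ref{prop:PC} — for instance $\langle \cB_\Phi^i(x),\Phi^i(x)\rangle=\sum_{p\in S^i}x^i_p[\Phi^i_p(x)-\overline{\Phi}^i(x)]^2$ for (RD), and the analogous sums of squares for the other four — holds unchanged, together with its equality case, which in each instance forces $\cB_\Phi^i(x)=0$. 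This establishes (PC) on all of $X$ for each of the five dynamics, including (RD).

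For Nash stationarity I would first note that $\cB_\Phi(x)=0$ is equivalent to $\cB_\Phi^i(x)=0$ for every $i\in I$. On the other side, Proposition \ref{VIP-compo} characterizes a composite equilibrium by $\langle\Phi(x),x-y\rangle\geq 0$ for all $y\in X$; since $X=\prod_{i\in I}X^i$, this is equivalent to $\langle\Phi^i(x),x^i-y^i\rangle\geq 0$ for all $y^i\in X^i$ and all $i$, i.e. to the statement that each $x^i$ maximizes $\langle\Phi^i(x),\cdot\rangle$ over $X^i$. This is precisely the support characterization ($x^i_p>0\Rightarrow\Phi^i_p(x)=\overline{\Phi}^i(x)\geq\Phi^i_q(x)$) used in the proof of Proposition \ref{prop:NS}. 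Feeding it, one participant at a time, into the arguments for (BNN), (Smith), (LP) and (GP) yields $\cB_\Phi(x)=0\Leftrightarrow x\in N\!E(\Phi)$ on $X$; the (RD) argument relies on the profile being equalizing and so again requires $x\in\inte X$, giving Nash stationarity for (RD) only on $\inte X$, exactly as before.

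The single point at which the composite structure could in principle interfere is this last equivalence, because Nash stationarity is a global condition rather than a coordinatewise inequality; the decisive fact is that equilibrium remains separable across participants, guaranteed by Proposition \ref{VIP-compo} precisely because participant $i$'s evaluation is independent of the categories of the others. Once that separability is invoked, every estimate and every equality case from Propositions \ref{prop:PC} and \ref{prop:NS} transfers without modification, and no new computation is required; I expect the only delicate wording to be making explicit that the per-participant reductions are legitimate, not the calculations themselves.
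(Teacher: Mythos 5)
Your proposal is correct and takes essentially the same route as the paper: the paper proves Proposition \ref{prop:specific} precisely by observing that the computations in Propositions \ref{prop:PC} and \ref{prop:NS} are per-participant identities in $\Phi^i(x)$ and $X^i$ alone, so they transfer verbatim once the composite equilibrium condition is separated across participants via Proposition \ref{VIP-compo}. Your treatment of (RD) --- (PC) holding on all of $X$ but Nash stationarity only on $\inte X$ --- also matches the paper's intent.
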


The definition of a potential game and that of a dissipative game for composite games are analogous  to those for each of the three frameworks.  Note that the condition for each participant is independent of the others. Hence the corresponding properties of composite dynamics for these specific classes of composite games can be proved in the same way as for Propositions \ref{prop:potential_lyapunov} and \ref{prop:dissip_lya}.

\begin{defi}
A composite  game $\Gamma(\Phi)$ is a {\em composite potential game} if there is a real-valued function $W$ of class $\mathcal{C}^1$ defined on a neighborhood $\Omega$ of $X$, called  {\em potential function}, and strictly positive functions $\mu^i, i \in I$ on $X$  such that for all $x \in X$,
 \begin{equation}\label{cnd:composite_potential}
\big\langle \nabla^{i} W(x) - \mu^i (x) \Phi^{i}(x) , y^i \big\rangle = 0, \quad \forall x \in X,  \forall y^i \in X^i_0, \, \forall i\in I,
\end{equation}
where $X^i_0 = \{y\in \rit^{|S^i|},\; \sum_{p\in S^i} y_p  = 0 \}$ for all $i\in I$. 
\end{defi}

\begin{pro} \label{prop:pot_lya}
In a composite potential  game $\Gamma(\Phi)$ with potential function $W$, if  the dynamics $\dot{x}=\cB_{\Phi}(x)$ satisfies Nash stationarity and (PC), then $W$ is a global  strict Lyapunov function for $\cB_{\Phi}$. Besides, all $\omega$-limit points are rest points of $\cB_{\Phi}$. 
\end{pro}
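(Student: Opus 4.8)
The plan is to mimic exactly the proof of the non-composite version, Proposition \ref{prop:pot_lya} (the earlier one with the same label), since the key algebraic identity does not care which category participant $i$ belongs to. First I would fix an arbitrary initial point $x\in X$, let $\{x_t\}_{t\geq 0}$ be the trajectory of $\dot x = \cB_\Phi(x)$ with $x_0 = x$, and set $V_t = W(x_t)$. The goal is to show $\dot V_t \geq 0$ along trajectories, with equality exactly at rest points, so that $W$ is a strict Lyapunov function in the standard sense.

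The central computation is to differentiate $V_t$ along the flow. By the chain rule, $\dot V_t = \langle \nabla W(x_t), \dot x_t\rangle = \sum_{i\in I}\langle \nabla^i W(x_t), \dot x^i_t\rangle$. The essential point, where invariance of $X$ enters, is that $\dot x^i_t = \cB_\Phi^i(x_t) \in X^i_0$ for every $i\in I$, so the defining identity of a composite potential game, equation \eqref{cnd:composite_potential}, lets me replace $\nabla^i W(x_t)$ by $\mu^i(x_t)\Phi^i(x_t)$ inside each inner product. This yields
\begin{equation*}
\dot V_t = \sum_{i\in I}\mu^i(x_t)\,\langle \Phi^i(x_t), \dot x^i_t\rangle.
\end{equation*}
Since each $\mu^i > 0$ and the dynamics satisfies (PC), every summand is $\geq 0$ (strictly positive whenever $\cB_\Phi^i(x_t)\neq 0$), so $\dot V_t \geq 0$.

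Next I would identify precisely when $\dot V_t = 0$. Because each $\mu^i(x_t) > 0$, the sum vanishes if and only if $\langle \Phi^i(x_t), \cB_\Phi^i(x_t)\rangle = 0$ for every $i$, which by the (PC) inequality forces $\cB_\Phi^i(x_t) = 0$ for all $i$, i.e. $\cB_\Phi(x_t) = 0$. Thus $W$ strictly increases off the set of rest points, establishing the strict Lyapunov property globally on $X$. I would then invoke Lyapunov's theorem (as cited, \cite[Theorem 2.6.1]{HSy}) to conclude that all $\omega$-limit points are rest points of $\cB_\Phi$.

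The one genuinely composite issue worth flagging — and the place where I would be careful rather than where there is a real obstacle — is that the identity \eqref{cnd:composite_potential} must hold for each $i$ regardless of category, and that $\cB_\Phi^i(x_t)\in X^i_0$ for each category of dynamics; but both of these are built into the definition of a composite potential game and into the invariance requirement on $\cB_\Phi$, so the argument is structurally identical to the single-framework case. The only substantive hypothesis that is now explicitly required (and was implicit via the (PC)/Nash-stationarity link before) is Nash stationarity, which guarantees that the rest points in the conclusion coincide with equilibria; I would note that it is used only to interpret the $\omega$-limit points, not in the Lyapunov estimate itself.
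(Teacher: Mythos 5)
Your proof is correct and follows essentially the same route as the paper, which proves the composite statement exactly by the argument of the single-framework version: differentiate $V_t = W(x_t)$ along trajectories, use $\dot x^i_t \in X^i_0$ together with \eqref{cnd:composite_potential} to replace $\nabla^i W$ by $\mu^i \Phi^i$, invoke (PC) and $\mu^i > 0$, and conclude via Lyapunov's theorem. Your closing observation is also accurate: Nash stationarity plays no role in the Lyapunov estimate itself and serves only to identify the rest points with equilibria.
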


%Propositions \ref{prop:pot_lya} and  \ref{prop:specific} yield the following result.
\begin{cor}
A potential function $W$ of a composite potential  game $\Gamma(\Phi)$ is a global strict Lyapunov function for (RD),  (Smith), (BNN), (LP), (GP) and (BR).
\end{cor}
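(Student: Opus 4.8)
The plan is to reduce the corollary to the composite version of Proposition~\ref{prop:pot_lya}, which already guarantees that $W$ is a global strict Lyapunov function for any composite dynamics $\cB_\Phi$ satisfying both (PC) and Nash stationarity. So the whole statement comes down to verifying, dynamics by dynamics, that these two hypotheses hold in the composite setting.

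For (Smith), (BNN), (LP) and (GP) I would invoke Proposition~\ref{prop:specific} directly, since it asserts that each of these composite dynamics satisfies (PC) and Nash stationarity on all of $X$. For (BR)---which is not listed there---I would observe that the computations of Propositions~\ref{prop:PC} and \ref{prop:NS} carry over verbatim to the composite case, because the relevant quantities $\langle \cB_\Phi^i(x),\Phi^i(x)\rangle=\langle y^i-x^i,\Phi^i(x)\rangle$ and the equivalence $\cB_\Phi^i(x)=0 \Leftrightarrow x^i\in B\!R^i(x)$ involve participant $i$ only through his own $\Phi^i$ and feasible set $X^i$, independently of the category of the other participants. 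This is exactly the ``decoupling'' observation already used to justify Proposition~\ref{prop:specific}. Granting this, all five of these dynamics meet the hypotheses of Proposition~\ref{prop:pot_lya}, and the conclusion follows at once.

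The remaining case, (RD), is where I expect the only genuine subtlety, since Proposition~\ref{prop:specific} supplies (PC) on all of $X$ but Nash stationarity only on $\inte X$; hence Proposition~\ref{prop:pot_lya} cannot be quoted as a black box. Here I would reopen its proof and note that the Lyapunov estimate
\begin{equation*}
\dot W(x_t)=\sum_{i\in I}\mu^i(x_t)\,\langle \Phi^i(x_t),\cB_\Phi^i(x_t)\rangle\geq 0,
\end{equation*}
together with the fact that (because every $\mu^i>0$) equality holds iff $\cB_\Phi^i(x_t)=0$ for all $i$, rests on (PC) alone. Thus $\dot W>0$ off the rest set of $\cB_\Phi$, which is precisely the global strict Lyapunov property, and it holds for (RD) as well. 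Nash stationarity enters Proposition~\ref{prop:pot_lya} only to identify rest points with equilibria; its failure on $\partial X$ means merely that the rest points toward which $W$ drives the (RD) flow need not all be equilibria of $\Gamma(\Phi)$---the caveat already recorded for (RD) in Proposition~\ref{prop:potential_lyapunov}---which does not affect the Lyapunov conclusion itself.

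The main obstacle is therefore this last step: recognizing that the strict Lyapunov property is a consequence of (PC) by itself, with Nash stationarity responsible only for the (finer) identification of limit sets. Everything else is an appeal to Propositions~\ref{prop:specific}, \ref{prop:PC} and \ref{prop:NS}, plus the category-independence of each participant's evaluation, which the composite framework was set up to exploit.
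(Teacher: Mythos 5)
Your proposal is correct and matches the paper's intended argument: the corollary is simply Proposition \ref{prop:specific} fed into the composite potential proposition, whose proof (carried out for the non-composite case in Section 4.3) shows that the strict Lyapunov estimate $\dot W = \sum_{i}\mu^i(x)\langle \Phi^i(x), \cB^i_\Phi(x)\rangle \geq 0$, with equality iff $\cB_\Phi(x)=0$, rests on (PC) alone, Nash stationarity entering only to identify rest points with equilibria---exactly your reading, and the source of the ``except for (RD)'' caveat in Proposition \ref{prop:potential_lyapunov}. Your two patches---rederiving (PC) and Nash stationarity for (BR), which the statement of Proposition \ref{prop:specific} omits (its first list evidently should read (BR) rather than (RD)), and declining to quote the composite potential proposition as a black box for (RD) on the boundary---repair what are slips in the paper's statements rather than gaps in the mathematics, so your route and the paper's coincide.
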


\begin{defi}
A composite game $\Gamma(\Phi)$ is a {\em composite dissipative game} if $-\Phi$ is a monotone operator on $X$. The game is {\em strictly dissipative} if $-\Phi$ is strictly monotone on $X$.
\end{defi}

\begin{pro} 
If a composite congestion game $\Gamma(\Phi)$ is dissipative, then one can find Lyapunov functions for (RD), (Smith), (BNN), (LP), (GP) and (BR) as defined in Proposition \ref{prop:dissip_lya} (with the assumption that $\Phi$ is of class $\mathcal{C}^1$ for (Smith), (BNN), (GP) and (BR)).
\end{pro}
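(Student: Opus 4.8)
The plan is to observe that every ingredient appearing in Proposition~\ref{prop:dissip_lya} --- both the candidate Lyapunov functions and the verification that each decreases along trajectories --- is expressed \emph{solely} in terms of the evaluation function $\Phi$, the domain $X=\prod_{i\in I}X^i$, a reference equilibrium $x^*\in N\!E(\Phi)$, and the positive-correlation and Nash-stationarity properties of the dynamics. None of these data refer to the category ($\mathbf{I}$, $\mathbf{I\!I}$ or $\mathbf{I\!I\!I}$) of an individual participant. Consequently, once the composite analogues of the underlying facts are in place, the appendix proofs transfer verbatim and no new computation is required.

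First I would record the composite versions of the facts that are used. In the composite congestion game $\Phi$ is a genuine map on $X$ (its blocks $\Phi^i$ being $F^i$, $\nabla^i H^i$ or $V\!G^i$ according to the category of $i$), and by Proposition~\ref{VIP-compo} its equilibria are exactly the solutions of $\langle\Phi(x),x-y\rangle\ge 0$ for all $y\in X$. The game being dissipative means, by definition, that $-\Phi$ is monotone, i.e. $\langle\Phi(x)-\Phi(y),x-y\rangle\le 0$; this is a condition on $\Phi$ alone, so the identity $S\!N\!E(\Phi)=N\!E(\Phi)$ of Proposition~\ref{lm:sne} continues to hold, its proof invoking only the two variational inequalities and this dissipativity inequality. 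Likewise, by Proposition~\ref{prop:specific} --- together with the observation that the computations in Propositions~\ref{prop:PC} and the Nash-stationarity arguments are themselves category-free, which also covers (BR) --- the composite dynamics $\cB_\Phi$, defined componentwise through the $\Phi^i$ exactly as in the single-framework case, satisfy positive correlation and Nash stationarity (on $\inte X$ for (RD)).

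With these in hand, the six Lyapunov estimates go through unchanged. For (RD) one differentiates $H(x)=\sum_{i\in I}\sum_{p\in\textrm{supp}(x^{i*})}x^{i*}_p\ln(x^{i*}_p/x^i_p)$ along $\dot x^i_p=x^i_p[\Phi^i_p(x)-\overline{\Phi}^i(x)]$ to obtain $\dot H=\langle\Phi(x),x-x^*\rangle$, which is $\le 0$ because $x^*\in S\!N\!E(\Phi)$ gives $\langle\Phi(x),x^*-x\rangle\ge 0$; under strict dissipativity the inequality is strict off $x^*$. For (BNN) and (Smith) the functions $H$ are precisely the sums of squares of the excess, respectively pairwise-comparison, evaluations already computed in the proof of Proposition~\ref{prop:PC}, and their time-derivatives are controlled by combining that positive-correlation identity with dissipativity of $\Phi$. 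For (LP), (GP) and (BR) one uses the projection and best-reply characterizations recalled in Section~2 together with dissipativity, exactly as in the appendix; for (Smith), (BNN), (GP) and (BR) the $\mathcal{C}^1$ hypothesis on $\Phi$ (valid, for instance, when every arc cost $l_a$ is $\mathcal{C}^1$) is what makes $H$ differentiable and ensures $\dot H$ exists.

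The only point demanding care --- and hence the main obstacle --- is to check line by line that each step of the appendix proofs of Proposition~\ref{prop:dissip_lya} uses nothing beyond $\Phi$, $X$, $x^*$ and the positive-correlation / Nash-stationarity facts, and in particular never the multilinearity of $V\!G^i$ nor the explicit first-order structure of $\nabla^i H^i$. Once this is confirmed the composite statement is immediate: the substantive work has already been carried out in establishing the composite variational formulation (Proposition~\ref{VIP-compo}) and the composite positive-correlation and Nash-stationarity properties (Proposition~\ref{prop:specific}).
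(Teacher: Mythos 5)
Your proposal is correct and follows essentially the same route as the paper: the paper's own justification is precisely the remark preceding the proposition that the appendix proofs of Proposition~\ref{prop:dissip_lya} are written entirely in terms of $\Phi$, $X$, a reference equilibrium, dissipativity and the (PC)/Nash-stationarity properties, and hence transfer verbatim to the composite setting via Propositions~\ref{VIP-compo} and~\ref{prop:specific}. Your line-by-line verification (including the use of $S\!N\!E(\Phi)=N\!E(\Phi)$ for the (RD) case, equivalent to the paper's direct monotonicity estimate) matches the intended argument.
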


As a matter of fact, we can obtain results stronger than Proposition 6.1, Corollary 6.1 and Proposition 6.3 which consider only homogeneous dynamical  models, i.e. where all the participants follow the same dynamics. Looking more closely into the proofs for the results in Sections 4.2-4.4,  one can see that the results extend naturally to heterogeneous dynamical models where the participants follow specific dynamics.  

Basically note that the condition for  positive correlation can be written component by component, hence it corresponds to a unilateral property: it depends only for each participant $i$ on the evaluation $\Phi^i$ and the dynamics $\cB^i_{\Phi}= \cB^i_{\Phi^i}$. 
For example, in a composite potential game, as long as  each of the dynamics followed by different participants satisfies (PC), the potential function is a strict Lyapunov function. Because of this unilateral property, the dynamics in this paper belong to the class of \emph{uncoupled dynamics} studied by Hart and Mas-Colell \cite{HM2003b}.
\medskip

\subsection{One example of a composite potential game}$ $ \\
%%%%%%%%%%%%%%%%%%%%%%ùù
%%%%%%%%%%%%%%%%%%%%%%
Consider a composite congestion game, with three categories of participants $i \in  I = I_1 \cup I_2 \cup I_3$,   of weight $m^i$ each, taking place in a network composed of  two nodes $o$ and $d$ connected by a finite set $A$ of parallel arcs. 
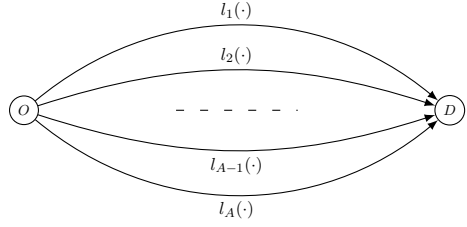
\begin{figure}[htbp!]
\caption{Example of a composite potential game\label{fg:exp}}
\begin{center}
\begin{tikzpicture}[scale=2]
\node[draw,circle,scale=0.5] (O)at(-1.4,0) {$O$};
\node[draw,circle,scale=0.5] (D)at(1.4,0) {$D$};
\node (P)[scale=0.5] at (-2,0) {};
\node (Q)[scale=0.5] at (2,0) {};
%\draw [->,>=stealth] (P) to (O);
%\draw [->,>=stealth] (D) to (Q);
\draw[->,>=latex] (O) to[bend left=40] node[midway,above,scale=0.6]{$l_1(\cdot)$}(D);
\draw[->,>=latex] (O) to[bend left=18] node[midway,above,scale=0.6]{$l_2(\cdot)$} (D);
%\draw[->,>=latex,dashed] (O) to (D);
\draw[->,>=latex] (O) to[bend right=18]  node[midway,below,scale=0.6]{$l_{A-1}(\cdot)$}(D);
\draw[->,>=latex] (O) to[bend right=40]  node[midway,below,scale=0.6]{$l_A(\cdot)$}(D);
%\draw [loosely dashed] (0,0.75) -- (0,-0.85)node[below,scale=0.5]{$A$};
\draw [loosely dashed]  (-0.4,0) -- (0.4,0);
\end{tikzpicture}
\end{center}
\end{figure}

 Denote by $s = (s^k)_{k \in I_3}\in S_3=A^{I_3}$ a pure strategy profile of participants in $I_3$  and let $z=((x^i)_{i\in I_1}, (x^j)_{j\in I_2}, (s^k)_{k\in I_3})$. Let $\bof(z)$ be the aggregate flow induced by the pure-strategy profile $z$. Namely: $\bof_a(z) = \sum_{i \in I_1} m^i x^i_a  +  \sum_{j \in  I_2}  m^j x^j_a + \sum_{k \in I_3}  m^k  \ind_{\{s^k=a\}}$.
\begin{theo}
Assume that for all  $a\in A$, the per-unit cost function is affine, i.e. $l_a(m)=b_a m+d_a$, with $b_a>0$ and $d_a\geq 0$. Then a composite congestion game in this network is a potential game. \\
A potential function defined on $X$ is given by:
\begin{equation}\label{ex_pot}
W(x)=-\sum_{s \in S_3} \Big(\prod_{k \in I_3}x^{k}_{s^k}\Big) \Big\{\frac{1}{2}\sum_{a \in A} b_a \big[(\bof_a(z)^2+\sum_{j\in I_2}(m^j x^{j}_a)^2+\sum_{k\in I_3}(m^k)^2 \ind_{\{s^k=a\}} \big]+\sum_{a\in A}d_{a}\bof_a(z)\Big\},
\end{equation}
with  $\mu^i(x)\equiv m^i$ for all $i\in I=I_1\cup I_2 \cup I_3$ and all $x\in X$. 
\end{theo}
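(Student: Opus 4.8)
The plan is to verify the composite potential condition \eqref{cnd:composite_potential} separately for each category of participant, using the equivalent pointwise forms noted just after the definition of a potential game: the stronger identity \eqref{condition:2} for populations ($I_1$) and splittable players ($I_2$), and the difference form \eqref{condition:1} for non splittable players ($I_3$). Throughout I abbreviate the expectation over the pure profiles of the non splittable players by $\E_s[\cdot]=\sum_{s\in S_3}\big(\prod_{k\in I_3}x^k_{s^k}\big)\,[\cdot]$, and I use that on the parallel network a path is a single arc, so $S^i=A$ and $\bof_a(z)=\sum_{i\in I_1}m^ix^i_a+\sum_{j\in I_2}m^jx^j_a+\sum_{k\in I_3}m^k\ind_{\{s^k=a\}}$. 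The evaluation functions are, as in Section 5, $\Phi^i_a(x)=-\E_s[l_a(\bof_a(z))]$ for $i\in I_1$, $\Phi^j_a(x)=-\partial u^j/\partial x^j_a$ for $j\in I_2$, and $\Phi^k_a(x)=-\sum_{s^{-k}}\big(\prod_{k'\neq k}x^{k'}_{s^{k'}}\big)\,l_a(\bof_a(z\mid s^k=a))$ for $k\in I_3$. Writing $W(x)=-\E_s[C(z)]$ with $C(z)$ the bracketed cost in \eqref{ex_pot}, the guiding observation is that for affine costs the Beckmann primitive $\int_0^m l_a=\tfrac12 b_a m^2+d_a m$ is exactly the shape appearing in $C$.

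For a population $i\in I_1$ the probability weights $\prod_k x^k_{s^k}$ and the correction terms $\sum_{j}(m^jx^j_a)^2$, $\sum_k(m^k)^2\ind$ are independent of $x^i$, so only $\tfrac12 b_a\bof_a^2+d_a\bof_a$ contributes and $\partial C/\partial x^i_a=m^i l_a(\bof_a(z))$; averaging gives $\partial W/\partial x^i_a=m^i\Phi^i_a(x)$, i.e. \eqref{condition:2} with $\mu^i=m^i$. For a splittable player $j\in I_2$ the self-interaction term $\tfrac12 b_a(m^jx^j_a)^2$ now also contributes the extra $b_a(m^j)^2x^j_a$, so that $\partial C/\partial x^j_a=m^j\big[l_a(\bof_a(z))+b_am^jx^j_a\big]$, which after averaging is precisely $m^j\Phi^j_a(x)$, again \eqref{condition:2}.

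The delicate case is a non splittable player $k\in I_3$, where $x^k$ enters $W$ only through the probabilities $\prod_{k'}x^{k'}_{s^{k'}}$ while $C(z)$ is a global sum over all arcs. Differentiating the product gives $\partial W/\partial x^k_a=-\sum_{s^{-k}}\big(\prod_{k'\neq k}x^{k'}_{s^{k'}}\big)\,C(a,s^{-k})$, where $C(a,s^{-k})$ is the cost when $k$ sits on arc $a$. Fixing $s^{-k}$ and the continuous variables, I would write $g_b$ for the flow on arc $b$ contributed by \emph{everyone except} $k$, so that $\bof_b=g_b+m^k\ind_{\{b=a\}}$, and expand $C(a,s^{-k})$. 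The crucial point is that the terms depending on the location $a$ of player $k$ reduce to $\tfrac12 b_a\big(2g_am^k+(m^k)^2\big)+\tfrac12 b_a(m^k)^2+d_am^k=m^k\big[b_a(g_a+m^k)+d_a\big]=m^kl_a(\bof_a)$: the two quadratic contributions — one from the cross term in $\bof_a^2$ and one from the explicit $(m^k)^2$ correction in \eqref{ex_pot} — combine to supply exactly the self weight $m^k$ inside the argument of $l_a$. Hence $C(a,s^{-k})=m^kl_a(\bof_a(a,s^{-k}))+R(s^{-k})$ with $R$ independent of $a$, and averaging gives $\partial W/\partial x^k_a=m^k\Phi^k_a(x)-K$ with $K$ independent of $a$. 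Taking the difference over two arcs removes $K$ and yields \eqref{condition:1} with $\mu^k=m^k$.

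Since \eqref{condition:2} and \eqref{condition:1} each imply the orthogonality condition \eqref{cnd:composite_potential}, the three cases together show that $W$ is a potential for $\Phi$ with $\mu^i\equiv m^i$, so the game is a composite potential game. The main obstacle is exactly the $I_3$ computation: one must check that the non splittable self-correction $(m^k)^2\ind_{\{s^k=a\}}$ has precisely the right size for the marginal of the global cost $C$ to collapse to the single-arc cost $l_a$ felt by player $k$. This is the discrete analogue of the splittable self-term, and it is what forces the particular form of \eqref{ex_pot}.
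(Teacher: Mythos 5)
Your proof is correct and follows essentially the same route as the paper: you verify \eqref{condition:2} for participants in $I_1\cup I_2$ and the difference form \eqref{condition:1} for those in $I_3$, with $\mu^i\equiv m^i$, and the key algebra is identical --- the cross term of $\bof_a^2$ and the explicit $(m^k)^2\ind_{\{s^k=a\}}$ correction combine to give exactly $m^k l_a(\bof_a)$. The only difference is presentational: you differentiate the mixed extension $W(x)=-\E_s[C(z)]$ directly, via the decomposition $C(a,s^{-k})=m^k l_a(\bof_a)+R(s^{-k})$, whereas the paper establishes the pure-profile identities \eqref{eq:ex1}--\eqref{eq:ex3} and then lifts them by multilinear extension, a step your computation simply makes explicit.
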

\begin{proof}
First notice that function $W$ defined in \eqref{ex_pot} is the multilinear extension of the following function defined on $Z$, the set of pure-strategy profiles:
\begin{align*}
W(z)=-\frac{1}{2}\sum_{a \in A}b_a \big[(\bof_a(z))^2+\sum_{j\in I_2}(m^j x^{j}_a)^2 +\sum_{k\in I_3}(m^k)^2 \ind_{\{s^k=a\}} \big]+\sum_{a\in A}d_a\bof_a(z).
\end{align*}
The per-unit cost to take arc $a$ when the pure-strategy profile is $z$ is $c_a(z)=  b_a \bof_a(z)+d_a$, for all arc $a\in A$. Recall that this is the opposite of the evaluation 
for the nonatomic players in population $i\in I_1$:  $\Phi^i_a (z) = - c_a(z)$. On the other hand,
\begin{equation}\label{eq:ex1}
- \frac{\partial W(z)}{\partial x^{i}_a} =\frac{1}{2} b_a \cdot 2 \bof_a(z)\cdot m^i +d_a m^i  = m^i \big[b_a \bof_a(z)+d_a \big]  = m^i c_a(z).
\end{equation}

For an atomic splittable player $j\in I_2$, when the pure-strategy profile is $z$, the  cost is $u^{j}(z)=  \sum_{a\in A} x^j_a \big[b_a\bof_a(z)+d_a\, \big]$. Therefore,
\begin{equation*}
 \frac{\partial u^j(z)}{\partial x^j_a}= b_a\bof_a(z)+d_a + b_a m^j x^{j}_a = -  \Phi^j_a (z) .
\end{equation*}
On the other hand,
\begin{equation}\label{eq:ex2}
- \frac{\partial W(z)}{\partial x^{j}_a} 
=\frac{1}{2} \,b_a \big[\, 2 m^j \bof_a(z) +2 (m^j)^2 x^{j}_a\,\big] +d_a m^j 
= m^j \big[b_a\bof_a(z) + b_a m^j x^{j}_a + d_a \big]
= m^j \frac{\partial u^j(z)}{\partial x^{j}_a}.
\end{equation}

Finally, for an atomic non splittable player $k\in I_3$, the cost to take arc $a$ when the other players play $z^{-k}$ is $u^k(z^{-k},a) = b_a\bof_a(z^{-k},a)+d_a = \Phi^k_a$. On the one hand,
\begin{equation*}
u^k(z^{-k},a) - u^k(z^{-k},r) =[\, b_a \bof_a(z^{-k},a)+d_a \,] - [\, b_r \bof_r(z^{-k},r)+d_r \,].
\end{equation*}
On the other hand,
\begin{align*}
&W(z^{-k},r) -  W(z^{-k},a)\\
%%%%%%%%%%%%%%%%%%%
& = \sum_{p \in A}\frac{b_p}{2} \big[(\bof_p(z^{-k},a))^2+\sum_{j\in I_2}(m^j x^{j}_p)^2\big]+\sum_{p\in A}d_{p}\bof_p(z^{-k},a) + \frac{1}{2}[b_a (m^k)^2+\sum_{l\in I_3\setminus \{k\}}b_{s^l} (m^l)^2] \\
%%%%%%%%%%%%%%%%%%5
& ~~~-\sum_{p\in A}\frac{b_p}{2} \big[(\bof_p(z^{-k},r))^2+\sum_{j\in I_2}(m^j x^{j}_p)^2\big] -\sum_{p\in A}d_{p}\bof_p(z^{-k},r) -  \frac{1}{2}[b_{r} (m^k)^2+\sum_{l\in I_3\setminus \{k\}}b_{s^l} (m^l)^2] \\
%%%%%%%%%%%%%%%%%%
& = \sum_{p\in \{a,r\}} \Big[\frac{b_p}{2}  (\bof_p(z^{-k},a))^2 +d_{p}\bof_p(z^{-k},a) -\frac{b_p}{2}  (\bof_p(z^{-k},r))^2 -d_{p}\bof_p(z^{-k},r) \Big] + \frac{b_a (m^k)^2}{2}- \frac{b_{r}(m^k)^2}{2}\\
%%%%%%%%%%%%%%%%%%%
& = \frac{b_a}{2}  [\bof_a(z^{-k},a)]^2+ d_a  \bof_a (z^{-k},a) -  \frac{b_a}{2}  [\bof_a(z^{-k},a)-m^k]^2 - d_a   [\bof_a(z^{-k},a)-m^k] + \frac{b_a (m^k)^2}{2}\\
%%%%%%%%%%%%%%%%%%%%
&~~~ + \frac{b_r}{2}  [\bof_r(z^{-k},r)-m^k]^2 + d_r  [\bof_r(z^{-k},r)-m^k]  - \frac{b_r}{2}  [\bof_r(z^{-k},r)]^2 - d_r  \bof_r(z^{-k},r) - \frac{b_{r} (m^k)^2}{2}\\
& = m^k  [b_a \bof_a(z^{-k},a)+d_a] - m^k[b_{r}\bof_r(z^{-k},r)+d_r].
\end{align*}
Thus
\begin{equation}\label{eq:ex3}
W(z^{-k},r) -  W(z^{-k},a) = m^k [u^k(z^{-k},a) - u^k(z^{-k},r)].
\end{equation}

By a multilinear extension with respect to $z$, \eqref{eq:ex1}--\eqref{eq:ex3} imply \eqref{cnd:composite_potential}.
\end{proof}

One can verify that  the potential function $W$ is concave on $X$. Therefore, $x \in X$ is a composite equilibrium of this composite congestion game if and only if it is a global maximizer of the potential function $W$.

Concerning the dynamics in this congestion game, note that Proposition  \ref{prop:pot_lya}  applies.

\section{Conclusion}
This paper first describes three frameworks with distinct categories of participants: nonatomic populations, atomic splittable and atomic non splittable players, then introduces a class of games called composite games where the three categories  coexist. We show that the static properties of the equilibria such as its characterization via variational inequalities, some conditions for the dynamics studied such as positive correlation, and the notion of potential games and dissipative games as well as their properties can be extended to composite games. In particular, the unilateral property of the dynamics and that of the positive correlation condition allow the dynamical system to converge when different participants follow different dynamics in a composite game.

As a matter of fact, one can further define a more general category of atomic players, called {\em composite players}. A composite player  of weight $m^i$ is described by the splittable component of weight $m^{i,0}$ and the non splittable components  of weight $m^{i,l}$ hence    represented by a vector $\underline{m}^{i}=(m^{i,0}, m^{i,1},\ldots,m^{i,n^{i}})$, where $n^{i}\in \nit^{*}$, $m^{i,0}\geq 0$, $m^{i,l}>0$ and $m^{i,0} + \sum^{n^{i}}_{l=1}m^{i,l}=m^{i}$. 
% It means that the weight of player $i$, of quantity $m^{i}$, is composed of $n^{i}$ non splittable parts, respectively of weight $m^{i,1},\ldots,m^{i,n^{i}}$, and an arbitrarily splittable part of weight $m^{i,0}$. 
Player $i$  may allocate proportions of the splittable component to different choices and also  allocate different non splittable components to  different choices. However, a non splittable component cannot  be divided.

Equilibria and dynamics in this set-up will be studied in a forthcoming work.

\section*{Appendix}
\begin{proof}[Proof of Proposition  \ref{prop:dissip_lya}]
%Mettre avant et s\'eparer dans les \'enonces le cas $\mathcal{C}^1$ ???\\
%***************************************
For a trajectory of dynamics $\dot{x}=\cB_{\Phi}(x)$ with initial point $x_0$,
\begin{equation*}
 \tfrac{\text{d} H}{\text{d}t}(x_t)  = \langle \nabla H(x_t), \dot{x}_t \rangle =  \langle \nabla H(x_t), \cB_{\Phi}(x_t) \rangle = \sum_{i\in I}  \langle \nabla^i H(x_t), \cB_{\Phi}^i(x_t) \rangle.
\end{equation*}
Hence we focus on $\langle \nabla H(x_t), \cB_{\Phi}(x_t) \rangle$. The subscript for time $t$ is omitted.
\medskip

\noindent (1) RD: Given  an equilibrium $x^*$, define 
$H(x) = \sum_{i \in I} h^i (x^i)$ with $h^i(x^i)= \sum_{p\in \textrm{supp}(x^{i*})}x^{i*}_p\ln \frac{x^{i*}_p}{x^i_p}$.

$H$ has a strict local minimum at $x^*$.  In fact, consider the  neighborhood of  $x^*$ in $X$ defined by $\mathcal{V}=\{x\in X,\; \textrm{supp}(x^*)\subset  \textrm{supp}(x)\}$. The concavity of $\ln x$ and Jensen's inequality imply:
\begin{align*}
h^i(x^ i) & =-\sum_{p\in \textrm{supp}(x^{i*})}x^{i*}_p\ln \frac{x^i_{p}}{x^{i*}_p}  \geq - \ln \big(\sum_{p\in \textrm{supp}(x^{i*})}x^{i*}_p\frac{x^i_{p}}{x^{i*}_p}\big) 
 \geq - \ln \big(\sum_{p\in \textrm{supp}(x)}x^i_{p}\big) =0,
\end{align*}
and the equality in both inequalities holds if and only if $x^i=x^{i*}$.

Consider a trajectory of the  RD dynamics with initial point $x_0\in \mathcal{V}$.
\begin{align*}
 \langle \nabla H(x), \cB_\Phi(x) \rangle
&  =  -\sum_{i\in I} \sum_{p\in \textrm{supp}(x^{i*})}  \frac{x^{i*}_p}{x^i_{p}} x^i_{p} \big[\Phi^i_p(x)-\langle x^i,  \Phi^i(x) \rangle \big]  \\
&  =  \sum_{i\in I}  \langle x^i-x^{i*},  \Phi^i(x)\rangle   =  \langle x - x^*, \Phi(x)\rangle .
\end{align*}
%
% \begin{eqnarray*}
% \frac{\text{d} H(x_t)}{\text{d}t} &  = &   \langle \nabla H(x_t), \cB_\Phi(x_t) \rangle\cr
% & =& -\sum_{i\in I} \sum_{p\in \textrm{supp}(x^{i*})}  \frac{x^{i*}_p}{x^i_{pt}} x^i_{pt} \big[ \Phi^i_p(x_t)-\langle x^i_t,  \Phi^i(x_t) \rangle \big]  \cr 
%  &=& \sum_{i\in I}  \langle x^i_t-x^{i*},  \Phi^i(x_t)\rangle \cr
%  &=&  \langle x_t-x*, \Phi(x_t)\rangle \cr
%\end{eqnarray*}    
For $x \neq x^*$, since $\Gamma(\Phi)$ is dissipative (resp. strictly dissipative), one has $\langle x - x^*, \Phi(x) - \Phi(x^*) \rangle \leq$  (resp. $<$) $0$, which implies that $\langle x - x^*, \Phi(x) \rangle \leq$ (resp. $<$) $\langle x - x^*, \Phi(x^*)\rangle\leq 0$, i.e. $\langle \nabla H(x), \cB_\Phi(x) \rangle \leq$ (resp. $<$) $0$.
 
Therefore, $H$ is a local Lyapunov function when $\Gamma(\Phi)$ is dissipative. If $\Gamma(\Phi)$ is strictly dissipative, then $x^*$ is the unique equilibrium, and $H$ is a strict local Lyapunov function.

This result is given by Hofbauer and Sandholm \cite{HofSan09} for population games.
\medskip
%%%%%%%%%%%%%

\noindent (2) BNN: $H(x) = \tfrac{1}{2}\sum_{i\in I}\sum_{p\in S^{i}}\hat{\Phi}^{i}_p(x)^{2}$. %(Smith \cite{Smith1983b,Smi1984a}, Hofbauer \cite{Hofb2000})
\begin{equation*}
  \langle \nabla H(x), \cB_{\Phi}(x) \rangle  =  \sum_{j\in I}\sum_{q\in S^j} \tfrac{\partial}{\partial x^j_q} \Big[\sum_{i\in I, p\in S^i}\hat{\Phi}^{i}_p(x)^{2} \Big] \dot x^j_q.
\end{equation*}
% \begin{eqnarray*}
% \frac{\text{d} H(x_t)}{\text{d}t} &  = &  \frac{1}{2}   \langle \nabla H(x_t), \cB(x_t) \rangle\cr
%& = & \sum_{j, q} \frac{\partial}{\partial x^j_q} [\sum_{i, p}\hat{\Phi}^{i}_p(x)^{2}] \dot x^j_q
%\end{eqnarray*}    
For $i = j$,
\begin{equation*}
\tfrac{\partial}{\partial x^j_q} \hat{\Phi}^{i}_p(x)^{2}  =   2 \hat{\Phi}^{i}_p(x)  \tfrac{\partial}{\partial x^j_q} \hat{\Phi}^{i}_p(x)  = 2 \hat{\Phi}^{i}_p(x)\, \Big[\tfrac{\partial}{\partial x^j_q}{\Phi}^{i}_p(x)  -\langle x^i,  \tfrac{\partial}{\partial x^j_q}{\Phi}^{i}(x)\rangle -  \Phi^j_q(x) \Big],
\end{equation*}
%\begin{eqnarray}
%\frac{\partial}{\partial x^j_q} \hat{\Phi}^{i}_p(x)^{2} &=&  2 \hat{\Phi}^{i}_p(x)  \frac{\partial}{\partial x^j_q} \hat{\Phi}^{i}_p(x) \cr
%&=&  2 \hat{\Phi}^{i}_p(x) [ \frac{\partial}{\partial x^j_q}{\Phi}^{i}_p(x)  -\langle x^i,  \frac{\partial}{\partial x^j_q}{\Phi}^{i}(x)\rangle -  \Phi^j_q(x)]
%\end{eqnarray}
and for $i \neq j$,
\begin{equation*}
\tfrac{\partial}{\partial x^j_q} \hat{\Phi}^{i}_p(x)^{2}=  2 \hat{\Phi}^{i}_p(x) \Big[\tfrac{\partial}{\partial x^j_q}{\Phi}^{i}_p(x)  -\langle x^i,  \tfrac{\partial}{\partial x^j_q}{\Phi}^{i}(x)\rangle \Big].
\end{equation*}
Thus,
\begin{align*}
 \langle \nabla H(x), \cB_{\Phi}(x) \rangle  
= & \; \sum_{j\in I}\sum_{q\in S^j}  \sum_{i\in I}\sum_{p\in S^i}  \Big[\, \hat{\Phi}^{i}_p(x)  - x^i_p  \sum_{r\in S^i}   \hat{\Phi}^{i}_r(x)  \,\Big] \, \tfrac{\partial}{\partial x^j_q}{\Phi}^{i}_p(x)  \dot x^j_q  \\
& ~~~~  \; -   \sum_{i\in I} \big[\sum_{p\in S^i} \hat{\Phi}^{i}_p(x) \big] \big[\sum_{q\in S^i}  \Phi^i_q(x)  \dot x^i_q \big] \\
 = & \; \langle \cB_{\Phi}(x), J_\Phi(x) \cB_{\Phi}(x)\rangle - \sum_{i\in I} \big[\sum_{p\in S^i} \hat{\Phi}^{i}_p(x) \big] \langle \Phi^i(x),  \cB^i_{\Phi}(x)\rangle.
\end{align*}
Since $\cB_{\Phi}(x)\in T_X(x)$ and $\Gamma(\Phi)$ is dissipative, $\langle \cB_{\Phi}(x), J_\Phi(x) \cB_{\Phi}(x)\rangle \leq 0$. Because BNN dynamics satisfies (PC),  $\langle \Phi^i(x), \cB^i_{\Phi}(x) \rangle>0$ for $x$ such that $\cB^i_{\Phi}(x)\neq 0$, hence $\langle \nabla H(x), \cB_{\Phi}(x) \rangle \leq 0$ and the equality holds if and only if $\cB_{\Phi}(x)=0$.

%On the other hand, for $x$ such that $\cB_{\Phi}(x)= 0$,  $\langle \nabla h(x), \cB_{\Phi}(x) \rangle=0$.

Therefore, $H$ is a strict Lyapunov function.

It is clear that $H(x)\geq 0$ and the equality holds if and only if for all $i \in I$ and all $p\in S^i$, $\hat{\Phi}^i_p(x)=0$, i.e. $x\in N\!E(\Phi)$.
%When $\Phi$ is strictly dissipative, there is a unique equilibrium. Since $ \langle \nabla H(x), \cB_{\Phi}(x) \rangle \leq 0$ and the equality holds if and only if $\cB_{\Phi}(x)=0$, and (BNN) satisfies Nash stationarity, it follows that $ \langle \nabla H(x), \cB_{\Phi}(x) \rangle \leq 0$ and the equality holds if and only if $x$ is the unique equilibrium. Therefore, $H$ is a global strict Lyapunov function.
%(and a global strict one if the equilibrium is unique),

This result is proved by Smith \cite{Smith1983b,Smi1984a} in a more general version and by Hofbauer \cite{Hofb2000} for one-population games.
\medskip
%%%%%%%%%%%%%%%

\noindent (3) Smith: $H(x) = \sum_{i \in I} \sum_{p,q\in S^i} x^i_{p} \{[\Phi^i_{q}(x)- \Phi^i_{p}(x)]^{+})^{2}$.% (Smith \cite{Smi84})
 
One has:
 \begin{align*}
\tfrac{\partial H(x)}{\partial x^i_p} 
& = \sum_{q\in S^i} \big([\Phi^i_ q- \Phi^i_p]^+ \big)^2 + \sum_{j\in I} \big\{\sum_{l\in S^j} x^j_l \sum_{q\in S^j} 2[\Phi^j_q- \Phi^j_l]^{+}\big(\tfrac{\partial  \Phi^j_q}{\partial x^i_p}-\tfrac{\partial  \Phi^j_l}{\partial x^i_p}) \big\} \\
 %& = \sum_{q}([F_q-F_p]^+)^2 + 2\sum_{q,l}x_l[F_q-F_l]^{+}\frac{\partial F_q}{\partial x_p}-2\sum_{q,l}x_l[F_q-F_l]^{+}\frac{\partial F_l}{\partial x_p}\\
%& = \sum_{q}([F_q-F_p]^+)^2 + 2\sum_{q,l}x_l[F_q-F_l]^{+}\frac{\partial F_q}{\partial x_p}-2\sum_{q,l}x_q[F_l-F_q]^{+}\frac{\partial F_q}{\partial x_p}\\
& = \sum_{q\in S^i}([\Phi^i_q- \Phi^i_p]^+)^2 + 2\sum_{j\in I} \sum_{q\in S^j} \big\{\sum_{l\in S^j} x^j_l [\Phi^j_q- \Phi^j_l]^{+} - x^j_q \sum_{l\in S^j} [\Phi^j_l- \Phi^j_q]^{+} \big\}\tfrac{\partial  \Phi^j_q}{\partial x^i_p}\\
 & = \sum_{q\in S^i} ([\Phi^i_q- \Phi^i_p]^+)^2 + 2\sum_{j\in I}\sum_{q\in S^j} \dot{x}^j_q \, \tfrac{\partial  \Phi^j_q}{\partial x^i_p}.
\end{align*}
%Hence,
%\begin{equation*}
%\nabla h(x)= \sum_{p,q}([F(x) \triangle_{pq}]^{+})^2 e_p + 2 J_{F}(x) \cB(x)
%\end{equation*}
%where $e_p$ is a $|S|$-dimensional vector whose $p$-th component is 1 while the others are all 0, $\triangle_{pq}=e_q-e_p$.
 It follows that:
\begin{align*}
 \langle \nabla H(x), \cB_{\Phi}(x) \rangle 
 &= A + 2 \langle \cB_{\Phi}(x), J_{\Phi}(x) \cB_{\Phi}(x) \rangle,
 \end{align*} 
 where:
\begin{align*} 
 A &=  \sum_{i\in I}\sum_{p,q\in S^i} ([\Phi^i_q- \Phi^i_p]^+)^2 \dot x^i_p\\
 & = \sum_{i\in I}\sum_{p,q\in S^i} \big([\Phi^i_q- \Phi^i_p]^+ \big)^2 \big\{\sum_{l\in S^i} x^i_l [\Phi^i_p- \Phi^i_l]^+ -x^i_p\sum_{l\in S^i} [\Phi^i_l- \Phi^i_p]^+  \big\}\\
% & =  \sum_{p,l,q} x_l[F_p-F_l]^+([F_q-F_p]^+)^2 -\sum_{p,l,q} x_p[F_l-F_p]^+([F_q-F_p]^+)^2+ 2 \langle \cB, J_F \cB \rangle\\
% & = \sum_{p,l,q} x_p[F_l-F_p]^+([F_q-F_l]^+)^2 -\sum_{p,l,q}x_p[F_l-F_p]^+([F_q-F_p]^+)^2 + 2 \langle \cB, J_F \cB \rangle\\
 & = \sum_{i\in I} \sum_{p,l,q\in S^i} x^i_p[\Phi^ i_l- \Phi^i_p]^+ \big\{([\Phi^i_q- \Phi^i_l]^+)^2 - ([\Phi^i_q- \Phi^i_p]^+)^2 \big\}.
\end{align*} 
Recall that  $\cB_{\Phi}(x)\in T_X(x)$, thus $\langle \cB_{\Phi}(x), J_F(x) \cB_{\Phi}(x) \rangle \leq 0$ since $\Gamma(\Phi)$ is dissipative. %, and in the strict dissipative case, the equality holds if and only if $\cB(x)=0$.
Also notice that if $\Phi^i_l- \Phi^i_p>0$, then $\Phi^i_q- \Phi^i_p> \Phi^i_q- \Phi^i_l$ and thus $[\Phi^i_q- \Phi^i_p]^+ \geq [\Phi^i_q- \Phi^i_l]^+$. As a consequence, each term  in $A$  is non positive. By taking only the terms such that $q=l$, one obtains:
\begin{equation*}
 \langle \nabla H(x), \cB_{\Phi}(x) \rangle \leq - \sum_{i\in I} \sum_{p,l,q\in S^i} x^i_p ([\Phi^i_l(x)- \Phi^i_p(x)]^+)^3\leq 0.
\end{equation*}
In addition,  $\langle \nabla H(x), \cB_{\Phi}(x) \rangle =0$ if and only if for all $i \in I$ and all $p \in S^i$, either $x^i_p=0$ or $\Phi^i_{p}(x)\geq  \Phi^i_{l}(x)$ for all $l\in S^i$; equivalently $\cB_{\Phi}(x)=0$.

Therefore, $H$ is a strict  Lyapunov function.

Clearly, $H(x)\geq 0$. And the equality holds if and only if for all $i \in I$ and all $q \in S^i$, either $x^i_q=0$ or $\Phi^i_{q}(x)\geq  \Phi^i_{p}(x)$ for all $p\in S^i$; equivalently, $x\in N\!E(\Phi)$.
% (and a global strict one if the equilibrium is unique), 
%For the same reason as in (BNN) case, $H$ is a global strict Lyapunov function when $\Phi$ is strictly dissipative.

This result is proved by Smith \cite{Smi84} in one-population setting.
\medskip
%%%%%%%%%%%%%

\noindent (4) LP: Given an  equilibrium $x^*$, let  $H(x) = \frac{1}{2} \|x-x^*\|^2$. %(Nagurney and Zhang \cite{NaguZhang97}, Zhang and Nagurney \cite{ZhangNa1997}, Pappalardo and Passacantando \cite{PappaPassa2002})

%It is clear that $h(x)\geq 0$ and the equality holds if and only if $x=x^*$.

Recall that for all $x\in X$, $x^* - x \in T_X(x)$, thus $\langle x^* - x, \Pi_{N_X(x)}[\Phi(x)] \rangle \leq 0$. Then:
\begin{align*}
\langle \nabla H(x), \cB_\Phi(x) \rangle 
& =\sum_{i\in I} \langle x^i-x^{i*}, \Pi_{T_{X^i}(x^i)} [\Phi^i(x)] \rangle 
 =\langle x-x^*, \Pi_{T_X(x)}[\Phi(x)] \rangle \\
& =\langle x-x^*, \Phi(x)-\Pi_{N_X(x)}[\Phi(x)] \rangle 
 = \langle x-x^*, \Phi(x) \rangle - \langle x-x^*, \Pi_{N_X(x)}[\Phi(x)] \rangle \\
& \leq  \langle x-x^*, \Phi(x)-\Phi(x^*) \rangle +  \langle x-x^*, \Phi(x^*) \rangle 
 \leq 0,
\end{align*}
because $\Gamma(\Phi)$ is dissipative and $x^*$ is an equilibrium. Besides, when $\Gamma(\Phi)$ is strictly dissipative, the equality holds if and only if $x=x^*$, the unique equilibrium.

Therefore, $H$ is a global Lyapunov function when $F$ is dissipative. In addition,  $H$ is a global strict Lyapunov function when $\Gamma(\Phi)$ is strictly dissipative.
%(and a global strict one if the equilibrium is unique), and a global strict Lyapunov function when $F$ is strictly dissipative. 

This result is proved by Nagurney and Zhang \cite{NaguZhang97} in one-population setting.
\medskip
%%%%%%%%%%%%%%%

\noindent (5) GP: $H(x) = \sup_{y\in X}L(x,y)$ with  $L(x,y)=\langle y-x, \Phi(x) \rangle - \frac{1}{2} \|y-x\|^2$, for $x,y\in X$. %(Pappalardo and Passacantando \cite{PappaPassa2004}).
%First let us show that the solution to the mathematical programming 
%\begin{equation}\label{eq:prog1}
%\sup_{y\in X} \langle\, y-x, F(x) \,\rangle - \frac{1}{2}\|y-x\|^2
%\end{equation}
% is the solution to another mathematical programming: 
%\begin{equation}\label{eq:prog2}
%\min_{y\in X}\|y-(x+F(x))\|^2.
%\end{equation}

Since:
\begin{align*}
\|y-(x+\Phi(x))\|^2
& = \|y-x\|^2+\|\Phi(x)\|^2-2\langle y-x, \Phi(x)\rangle,
%& = \|F(x)\|^2 - 2[\langle y-x, F(x)\rangle - \frac{1}{2}\|y-x\|^2]
\end{align*}
one has  $H(x) = L(x, y^*(x))$ with  $y^*(x)=\Pi_{X}(x+\Phi(x))$. 
%Next let us show that $h$ is differentiable. 
 %Then $H(x)=\max_{y\in X} L(x,y)$. 
 By the Envelope theorem:
\begin{align*}
\nabla H(x)  =\nabla_x L(x, y^*(x)) %= \nabla_x L(x, \Pi_X (x+F(x))) \\
%& = -F(x) + (y^*(x) -x ) J^{\tau}_F(x) + (y^*(x)-x) \\
 = -\Phi(x) + (y^*(x) -x) (J^{\tau}_\Phi(x)+I)
\end{align*}
so that:
\begin{align*}
\langle \nabla H(x), \cB_{\Phi}(x) \rangle & = \langle -\Phi(x) + (y^*(x) -x) (J^{\tau}_\Phi(x)+I), y^*(x)-x\rangle \\
& = \langle  (x+\Phi(x))-\Pi_{X}(x+\Phi(x)), x-\Pi_{X}(x+\Phi(x))\rangle + \langle \cB_\Phi(x), J_\Phi(x) \cB_\Phi(x) \rangle \leq 0.
\end{align*}
The first term is negative by property of $\Pi_X$. The second term is negative because $\Gamma(\Phi)$ is dissipative.
%In addition, when $\Phi$ is strictly dissipative, then the equality holds if and only if $x=\Pi_{X}(x+\Phi(x))$ and $\cB_\Phi(x)=0$ or, equivalently, $x\in N\!E(\Phi)$.

Therefore, $H$ is a global Lyapunov function. 

Note that $H$ is a global strict Lyapunov function when $\Gamma(\Phi)$ is strictly dissipative.

Finally,
%The solution to the programming \eqref{eq:prog2} is $y^*(x)=\Pi_{X}(x+F(x))$. Therefore,
\begin{align*}
H(x) 
& = \langle\Pi_{X}(x+\Phi(x)) - x, \Phi(x) \rangle - \tfrac{1}{2}\|\Pi_{X}(x+\Phi(x))-x\|^2\\
%& = \langle\Pi_{X}(x+F(x)) - x, F(x) \rangle - \frac{1}{2}\|\Pi_{X}(x+F(x))-(x+F(x))\|^2 - \frac{1}{2} \|F(x)\|^2 - \langle\Pi_{X}(x+F(x)) - (x+F(x)), F(x) \rangle\\
& = \tfrac{1}{2} \|\Phi(x)\|^2  - \tfrac{1}{2}\|\Pi_{X}(x+\Phi(x))-(x+\Phi(x))\|^2 \\
& = \tfrac{1}{2} \|(x+\Phi(x))-x\|^2  - \tfrac{1}{2}\|(x+\Phi(x))-\Pi_{X}(x+\Phi(x))\|^2 \geq 0.
\end{align*}
The inequality is due to the definition of projection $\Pi_X$, and the equality holds if and only if $x=\Pi_{X}(x+\Phi(x))$, i.e. $x\in N\!E(\Phi)$.
%(and a global strict one if the equilibrium is unique), and a global strict Lyapunov function when $\Phi$ is strictly dissipative.

This result is proved by Pappalardo and Passacantando \cite{PappaPassa2004} in one-population setting.
\medskip
%%%%%%%%%%%%%%%

\noindent (6) BR: $H(x) = \sup_{y\in X} M(x,y)$  with $M(x,y)=\langle y-x, \Phi(x) \rangle $, for $x,y\in X$.% (Hofbauer and Sandholm \cite{HofSan09})

Let  $H(x) = M(x, \bar y (x))$ with $\bar y (x) \in B\!R(x)$. By the Envelope theorem, for any $\bar y (x)$,
\begin{equation*}
\nabla H(x) =\nabla_x M(x,\bar y (x))  
%& = -F(x) + (y^*(x) -x) J^{\tau}_F(x) + (y^*(x)-x) \\
 = -\Phi(x) + (\bar y (x) -x) J^{\tau}_\Phi(x)
\end{equation*}
Hence,
\begin{align*}
\langle \nabla H, \cB_{\Phi}(x) \rangle & = \langle -\Phi(x) + (\bar y(x) -x) J^{\tau}_\Phi(x), \bar y(x)-x\rangle 
 = - H(x)  + \langle \cB_{\Phi}(x), J_{\Phi}(x) \cB_{\Phi}(x) \rangle 
 \leq 0.
\end{align*}
The second term is negative because $\Gamma(\Phi)$ is dissipative. Then the equality holds if and only if  $H(x)=0$ or, equivalently, $x\in N\!E(\Phi)$.

Therefore $H$ is  a strict Lyapunov function. 
%For the same reason as in (BNN) case, $H$ is a global strict Lyapunov function when $\Phi$ is strictly dissipative.
% and a global strict Lyapunov function when $\Phi$ is strictly dissipative.

This result is proven by Hofbauer and Sandholm \cite{HofSan09} for population games.
\end{proof} %%%%%%%%%%%%%%%%%%%%%%%%%%%%%%%%%%%%%%%%%%%%%%%%%%%%%%%%%%%%%%%%%%%%%%%%%%%%%%%%%%%%%%%%%%%%%%%%%%%%%%%%%%%%%%%%%%%%%%%%%%%%%%%%%%%%%%%%%%%%%%%%%%%%%%%%%%%%%%%%%%%%%%%%%%%%%%%%%%%%%%%%%%%%%

\end{document}